\numberwithin{equation}{section}
\theoremstyle{plain}
\newtheorem{thm}{Theorem}[section]
\newtheorem{lem}[thm]{Lemma}
\theoremstyle{definition}
\theoremstyle{remark}
\newtheorem{rem}[thm]{Remark}
\newcommand{\R}{\mathbb{R}}
\newcommand{\T}{\mathbb{T}}
\newcommand\N{{\mathbb N}}
\newcommand\Z{{\mathbb Z}}
\newcommand\pref[1]{(\ref{#1})}
\newcommand\loc{{\mathrm{loc}}}
\let \wto\rightharpoonup
\let \eps\varepsilon
\DeclareMathOperator{\ent}{Ent}
\newcommand\dive{\mathrm{div}}
\def\<#1,#2>{\left<#1,#2\right>}
\let \wto\rightharpoonup
\def\PP{{\mathcal P}}
\def\E{{\mathcal E}}
\def\U{{\mathcal U}}
\def\PT{\PP(\T^d)}
\def\PTac{\PP_{\mathrm{ac}}(\T^d)}
\title{On systems of continuity equations with nonlinear diffusion and nonlocal drifts}
\author {Guillaume Carlier \thanks{\scriptsize CEREMADE, UMR CNRS 7534, Universit\'e Paris IX
Dauphine, Pl. de Lattre de Tassigny, 75775 Paris Cedex 16, FRANCE
\texttt{carlier@ceremade.dauphine.fr}},
Maxime Laborde \thanks{\scriptsize CEREMADE, UMR CNRS 7534, Universit\'e Paris IX
Dauphine, Pl. de Lattre de Tassigny, 75775 Paris Cedex 16, FRANCE
\texttt{laborde@ceremade.dauphine.fr}.}}
\begin{document}

\maketitle

\begin{abstract}
This paper is devoted to existence and uniqueness results for classes of nonlinear diffusion equations (or systems) which may be viewed as \emph{regular} perturbations of Wasserstein gradient flows. First, in the  case where the drift is a gradient (in the physical space), we obtain existence by a semi-implicit Jordan-Kinderlehrer-Otto scheme. Then, in the nonpotential case, we derive existence from a regularization procedure and parabolic energy estimates.  We also address the uniqueness issue by a displacement convexity argument. 

\end{abstract}

\textbf{Keywords:} nonlinear diffusion, systems, interacting species, Wasserstein gradient flows, semi-implicit JKO scheme, nonlinear parabolic equations.

\medskip

\textbf{MS Classification:} 35K15, 35K40, 49J40.

\section{Introduction}

The continuity equation with a density-dependent drift
\[\partial_t \rho=\dive(\rho v), \; \mbox{ with }  v=V[\rho]\]
is ubiquitous in  modeling and arises in a variety of domains such as biology, particle physics, population dynamics, crowd modelling, opinion formation... It should actually come as no surprise since it captures the dynamics of a  population of particles following the ODE $\dot{X}=-v(t,X)$ where $v=V[\rho]$ depends itself on the density in a way (local, nonlocal, attractive, repulsive etc..) depending on which phenomena (aggregation, diffusion...) one aims to capture and the type of applications. 

\smallskip

Of course, at this level of generality not much can be said on existence and uniqueness. There are however two cases which may be treated in a systematic way. The first one, is the \emph{regular case} where $V[\rho]$ is  a smooth vector field whatever the probability measure $\rho$ is, with some uniform bounds on some of its derivatives  and $\rho \mapsto V[\rho]$ is Lipschitz in the Wasserstein metric. In this regular case, existence and uniqueness can be proved as a simple exercise by the method of characteristics and a suitable fixed point argument. This regular case (a typical example being that of a convolution) is however rather restrictive and for instance rules out diffusion. The second case where there is a general theory is the\emph{Wasserstein gradient flow case}. In this case, at least at a formal level, $v$ may be written as $v=\nabla \frac{\delta \E} {\delta \rho}$ that is the gradient of the first variation of a functional $\E$ defined on measures. In their seminal paper \cite{jko}, Jordan, Kinderlehrer and Otto  discovered that the heat flow is the gradient flow of the entropy functional  $\E(\rho)=\int \rho \log(\rho)$ which corresponds to the case $v=\frac{\nabla \rho}{\rho}$. The theory of Wasserstein gradient flows has been very succesful in addressing a variety of nonlinear evolution equations such as the porous medium equation \cite{ottopm}, aggregation equations \cite{cdfls} or granular media equations \cite{cmv}. This powerful theory is presented in a complete and detailed way in the reference book of Ambrosio, Gigli and Savar\'e \cite{ags}.

\smallskip

The purpose of the present paper is  a contribution to the following general question: can one hope for an existence/uniqueness theory in the case where $V$ is the sum of a Wasserstein gradient flow term and a regular term (not necessarily a gradient).  Our motivation for this question actually comes from systems. For instance,  a  simple but natural model, for the evolution of two (say) interacting species is:
\[\begin{cases}
\partial_t \rho_1= \nu_1 \Delta \rho_1 + \dive(\rho_1 \nabla( F \star \rho_1+ G\star \rho_2)), \\
\partial_t \rho_2=\nu_2 \Delta \rho_2 +\dive(\rho_2 \nabla (H\star \rho_1+K\star \rho_2)).
\end{cases}\]
 When $\nu_1=\nu_2=0$ i.e. without diffusion, this is exactly the system studied by Di Francesco and Fagioli \cite{dff}. As emphasized in \cite{dff}, if  cross-interactions are symmetric i.e.  $G=K$ (or more generally $G$ and $K$ are proportional), this system has a (product)  Wasserstein gradient flow structure but this is certainly a restrictive and often unrealistic assumption in applications. This is why Di Francesco and Fagioli, still taking advantage of the similarity with Wasserstein gradient flows used a semi-implicit scheme  \`a la Jordan-Kinderlehrer-Otto  to obtain existence and uniqueness results. In \cite{dff}, there is no diffusion but clearly the structure of the system belongs to the \emph{mixed} case where drifts can be decomposed as the sum of a Wasserstein gradient and a regular term. Of course, the semi-implicit scheme only makes sense when drifts are gradients.

 \smallskip

Regarding systems with a gradient structure and in the presence of nonlinear diffusion, our first contribution is to establish strong enough convergence at the level of  the semi-implicit scheme to recover a solution of the PDE at the limit. The delicate step is of course to pass to the limit in the nonlinear diffusion term, which can be done thanks to the powerful \emph{flow interchange} argument of Matthes, McCann and Savar\'e \cite{mms} in a similar way as in the work of Di Francesco and Matthes \cite{dfm}. We will then address the nonpotential case in which the drift may contain a nongradient (but regular) part. This case cannot be attacked by the semi-implicit minimization scheme and we will prove existence  by suitably regularizing the diffusion and using standard parabolic energy estimates. Finally, we will derive an uniqueness result from displacement convexity of the energy.

The paper is organized as follows. In section \ref{sec-jko-si}, we consider the potential case, introduce a semi-implicit scheme \`a la Jordan-Kinderlehrer-Otto \cite{jko} and state a first existence result. Section \ref{sec-proof} is devoted to the proof of this existence result. Section \ref{sec-systems} extends the result to the case of systems (again in the case where all drifts are gradients). Section \ref{sec-nonpot} proves existence for the non-potential case. The final section \ref{sec-uniq} shows uniqueness by a simple displacement convexity argument.

\section{The potential case and the semi-implicit JKO scheme}\label{sec-jko-si}

Our aim is to solve the following nonlinear diffusion equation:
\begin{equation}\label{nonlineardiffpot}
\partial_t \rho=\dive\Big(\rho \nabla (E'(\rho) + U[\rho])\Big), \; \rho\vert_{t=0}=\rho_0,
\end{equation}
on $(0,+\infty)\times \T^d$ where $\T^d:=\R^d/\Z^d$ denotes the flat torus (we take periodic boundary conditions to simplify the exposition, we refer to the work of the second author \cite{laborde} for extensions to $\R^d$ or a bounded domain) which we identify with the unit cube $[0,1]^d$ equipped with the quotient distance:
\[d^2(x,y):=\inf_{k\in \Z^d}  \vert x-y+k\vert^2.\] 
Denoting by $\PT$ the set of Borel probability measures on $\T^d$, we assume the following assumption on the map $\rho \in \PT\mapsto U(\rho)$: 
\begin{equation}\label{hypv0pot}
\forall \rho \in \PT,   U[\rho] \in W^{2, \infty}(\T^d),  \mbox{ and }  U[\rho]\ge 0, 
\end{equation}
\begin{equation}\label{hypv1pot}
  \sup_{\rho \in \PT} \{  \Vert \nabla U[\rho] \Vert_{L^\infty} +  \Vert (\Delta  U[\rho])_+ \Vert_{L^\infty} \}<+\infty
\end{equation}
and there exists a constant $C$ such that for all $(\rho, \nu)\in \PT\times \PT$
\begin{equation}\label{hypv2pot}
\Vert \nabla U[\rho]-\nabla U [\nu]\Vert_{L^{\infty}(\T^d)} \le C W_2(\rho,\nu),
\end{equation}
with $W_2(\rho,\nu)$ denoting the $2$-Wasserstein distance between $\rho$ and $\nu$ i.e. 
\[W_2(\rho,\nu):= \inf_{\gamma\in \Pi(\rho, \nu) }  \Big\{\int_{\T^d\times \T^d} d^2(x,y) \mbox{d} \gamma(x,y) \Big\}^{\frac{1}{2}} \]
where $ \Pi(\mu, \nu)$ is the set of transport plans between $\rho$ and $\nu$ i.e. the set of  Borel probability measures on $\T^d\times \T^d$ having $\mu$ and $\nu$ as marginals. It is well known, see \cite{villani, villani2, fs}, that $W_2$ metrizes the weak star topology on $\PT$ so that $(\PT, W_2)$ is a compact metric space.

As for the nonlinear diffusion term $\dive(\rho  \nabla (E'(\rho)))$ it is convenient to rewrite it as:
\[\dive(\rho  \nabla (E'(\rho)))=\Delta F'(\rho)\]
where $F'(t):=tE'(t)-E(t)$ so that $F''(t)=tE''(t)$. The typical energies $E$ we have in mind are the following classical examples
\begin{itemize}
\item Entropy: $E(t):=t\log(t)$ so that $F'(t)=t$, $F''(t)=1$ (which thus gives a linear diffusion driven by the laplacian),

\item Porous media $E(t)=t^m$ with $m>1$ so that $F'(t)=(m-1)t^m$, $F(t)=\frac{m-1}{m+1} t^{m+1}$.

\end{itemize}

We shall assume that $E$ is a continuous convex function on $\R_+$ with $E(0)=0$, $E$ is of class $C^2$ on $(0,+\infty)$ and that there are constants  $C>0$ and $m\ge 1$ such that
\begin{equation}\label{hypEpot}
E''(t)\ge \frac{t^{m-2}}{C}, \; F'(t)=tE'(t)-E(t)\le C(1+t^m), \; \forall t\in (0,+\infty).
\end{equation}
Of course, these assumptions are satisfied in the examples above corresponding respectively to linear diffusion and the porous medium equation. 

Finally, we assume that the initial condition $\rho_0\in \PT$ satisfies
\begin{equation}\label{hyprho0pot}
\int_{\T^d} E(\rho_0(x)) \mbox{ d} x<+\infty
\end{equation}
which with \pref{hypEpot} in particular implies that $\rho_0\in L^m(\T^d)$ and $F'(\rho_0)\in L^1(\T^d)$.  

A weak solution of \pref{nonlineardiffpot} then is a  curve $t\in (0,+\infty)\mapsto \rho(t,.)\in \PT$ such that $F'(\rho) \in L^1_{\loc}((0,+\infty)\times \T^d)$ and 
\begin{equation}\label{solfaiblepot}
\int_0^{+\infty} \int_{\T^d}  (\partial_t \phi  \rho +\Delta \phi F'(\rho)-\nabla U[\rho] \cdot \nabla \phi \rho) \mbox{ d} x  \mbox{ d} t=-\int_{\T^d} \phi(0,x) \rho_0(x)  \mbox{ d} x
\end{equation}
for every $\phi\in C_c^2([0,+\infty)\times \T^d)$.

\begin{thm}\label{existporouspot}
Assume \pref{hypv0pot}-\pref{hypv1pot}-\pref{hypv2pot}-\pref{hypEpot}-\pref{hyprho0pot}, then \pref{nonlineardiffpot} admits at least one weak solution. 
\end{thm}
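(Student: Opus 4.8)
The natural approach is a semi-implicit JKO (minimizing movement) scheme. Fix a time step $h>0$ and set $\rho_h^0 := \rho_0$. Given $\rho_h^k$, define $\rho_h^{k+1}$ as a minimizer over $\PT$ of
\[
\rho \mapsto \frac{1}{2h} W_2^2(\rho, \rho_h^k) + \int_{\T^d} E(\rho)\, \mathrm{d}x + \int_{\T^d} U[\rho_h^k]\, \rho\, \mathrm{d}x.
\]
The key feature making this \emph{semi}-implicit is that $U$ is evaluated at the previous step $\rho_h^k$, so the functional is a genuine JKO step for the convex ``potential + internal energy'' $\rho \mapsto \int E(\rho) + \int U[\rho_h^k]\rho$; existence of a minimizer follows from lower semicontinuity (convexity of $E$ and $E(0)=0$ give weak-$*$ l.s.c. of $\int E(\rho)$, while $U[\rho_h^k]$ is a fixed bounded continuous weight) and compactness of $(\PT, W_2)$. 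First I would record the Euler--Lagrange equation of this scheme: testing against perturbations along smooth vector fields (flow interpolation) yields the discrete weak formulation stating that $\rho_h^{k+1}$ solves, in a distributional sense, $(\rho_h^{k+1} - \rho_h^k)/h = \Delta F'(\rho_h^{k+1}) + \dive(\rho_h^{k+1}\nabla U[\rho_h^k])$, with the optimal transport map between $\rho_h^{k+1}$ and $\rho_h^k$ playing the role of the discrete velocity.

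Next come the a priori estimates. Using $\rho = \rho_h^k$ as a competitor in the minimization for $\rho_h^{k+1}$ gives the basic inequality
\[
\frac{1}{2h} W_2^2(\rho_h^{k+1}, \rho_h^k) + \int E(\rho_h^{k+1}) + \int U[\rho_h^k]\rho_h^{k+1} \le \int E(\rho_h^k) + \int U[\rho_h^k]\rho_h^k.
\]
Summing over $k$, and controlling the drift terms via \pref{hypv1pot} (the bound on $\nabla U$ and on $(\Delta U)_+$) together with the $L^\infty$ bound on $U$ and the Wasserstein bound \pref{hypv2pot}, yields: (i) a uniform bound $\sum_k W_2^2(\rho_h^{k+1},\rho_h^k) \le Ch$, hence the classical $\tfrac12$-Hölder-in-time estimate for the piecewise-constant (and piecewise-geodesic) interpolations $\rho_h(t,\cdot)$; and (ii) a uniform bound on $\sup_k \int E(\rho_h^k)$, which by \pref{hypEpot} gives a uniform $L^\infty_t L^m_x$ bound on $\rho_h$ and a uniform $L^\infty_t L^1_x$ bound on $F'(\rho_h)$. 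By a refined Ascoli-type argument (as in Ambrosio--Gigli--Savaré \cite{ags}) one extracts a subsequence $h\to 0$ with $\rho_h(t,\cdot) \to \rho(t,\cdot)$ narrowly for every $t$, uniformly on compact time intervals, and $\rho \in C^{1/2}([0,\infty); (\PT,W_2))$.

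The main obstacle is passing to the limit in the nonlinear diffusion term $\Delta F'(\rho_h)$: narrow convergence of $\rho_h$ is far too weak, since $F'$ is nonlinear, so I need strong $L^1_{\loc}$ (or better, $L^m_{\loc}$) compactness of $\rho_h$ in space-time. This is exactly where the \emph{flow interchange} technique of Matthes--McCann--Savaré \cite{mms} enters, used as in Di Francesco--Matthes \cite{dfm}: one differentiates the entropy $\int E(\cdot)$ (or an auxiliary functional) along the heat semigroup $(S_t)_{t\ge0}$ acting on $\rho_h^{k+1}$, and compares with the decrease of the JKO functional; the dissipation of the heat flow for the energy $\int E$ produces, after summation, a uniform bound of the form $\int_0^T \int_{\T^d} |\nabla F'(\rho_h)^{1/2}|^2$ or more precisely an $L^2_{t,x}$ bound on $\nabla (\text{some power of } \rho_h)$ — concretely a bound on $\nabla \rho_h^{m/2}$ in $L^2((0,T)\times\T^d)$ — which combined with the time-regularity estimate and a Riesz--Fréchet--Kolmogorov / Aubin--Lions argument yields strong convergence $\rho_h \to \rho$ in $L^m_{\loc}$, hence $F'(\rho_h) \to F'(\rho)$ in $L^1_{\loc}$. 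The error term in the flow interchange inequality coming from the drift must be controlled: differentiating $\int U[\rho_h^k]\rho$ along the heat flow produces a term handled by \pref{hypv1pot} (integrating by parts moves a derivative onto $U$, and $(\Delta U)_+$ is bounded), which is harmless after summation. With strong $L^1_{\loc}$ convergence of $F'(\rho_h)$ in hand, and the uniform $\nabla U$ bound plus the Lipschitz property \pref{hypv2pot} ensuring $\nabla U[\rho_h^k] \to \nabla U[\rho]$ uniformly, I pass to the limit in the discrete weak formulation (re-indexing the piecewise-constant interpolations so that the drift is evaluated at a level converging to $\rho(t)$) and recover the weak formulation \pref{solfaiblepot}, including the correct treatment of the initial datum via the uniform time-continuity estimate. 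This completes the proof. $\qquad\blacksquare$
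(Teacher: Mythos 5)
Your plan is correct and follows essentially the same route as the paper: the semi-implicit JKO scheme with $U$ frozen at $\rho_h^k$, the basic competitor estimate giving the summed $W_2^2$ bound, the energy bound and the H\"older-in-time/Ascoli compactness, the flow interchange along the heat semigroup (with the drift term controlled by the bound on $(\Delta U[\cdot])_+$) yielding the $L^2$ bound on $\nabla \rho_h^{m/2}$ and hence strong $L^m$ compactness via Aubin--Lions, and finally passage to the limit in the Euler--Lagrange equation of the scheme. No substantive differences from the paper's proof.
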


The complete proof of this result is given in section \ref{sec-proof}. This proof is strongly based on a semi-implicit version of the Jordan-Kinderlehrer-Otto (JKO) scheme \cite{jko} as in Di Francesco and Fagioli \cite{dff}. More precisely given a time step $h>0$, we construct inductively a sequence $\rho_h^k\in \PT$ by setting $\rho_h^0=\rho_0$ and, given $\rho_h^k$ we select $\rho_h^{k+1}$ as a solution of 
\begin{equation}\label{semi-imp-jko}
\inf_{\rho\in \PT} \Big\{\frac{1}{2h} W_2^2 (\rho, \rho_h^k)+ \E(\rho)+ \U(\rho\vert \rho_{h}^k)\Big\}
\end{equation}
where
\begin{equation}\label{defdesfonctionnelles}
\E(\rho):= \begin{cases} \int_{\T^d} E(\rho(x)) \mbox{ d} x, \; \mbox{ if $E(\rho)\in L^1$},\\+\infty, \mbox{ otherwise,} \end{cases}   \U(\rho\vert \nu):=\int_{\T^d} U[\nu] \mbox{d} \rho.
\end{equation}
Note that assumption \pref{hypEpot} ensures that $\E$ controls from above  $\int_{\T^d} \rho^m \mbox{d} x$ if $m>1$ and $\int_{\T^d} \rho \log(\rho) \mbox{d} x$ if $m=1$, so in any case sublevels of $\E$ are weakly relatively compact in $L^m(\T^d)$. 

By standard lower semicontinuity and compactness arguments, it is clear that \pref{semi-imp-jko} possesses solutions so that one can indeed generate a sequence $(\rho_h^k)_{k\in \N}$ by the semi-implicit JKO scheme \pref{semi-imp-jko}. It is even uniquely defined (but we won't really need it in the sequel) because each $\rho_h^k$ remains absolutely continuous with respect to Lebesgue's measure, so that $\rho\mapsto W_2^2 (\rho, \rho_h^k)$ is in fact strictly convex (see Proposition 7.19 of \cite{fs}) and the other terms $\E$ and $\U(.\vert \rho_h^k)$ are convex. We finally extend in a piecewise constant way the sequence $(\rho_h^k)_{k\in \N}$ i.e. set:
\begin{equation}
\rho_h(t,.):=\rho_h^k \mbox{ for $t\in ((k-1)h, kh]$ and $k\in \N$}.
\end{equation}
The proof detailed in the next section consists in showing that as $h\to 0$, one may recover a limit $\rho$ which satisfies \pref{nonlineardiffpot}. This is the same strategy as in \cite{dff} but the tricky part consists in passing to the limit in the nonlinear diffusion term $F'(\rho_h)$. This will be done thanks to the powerful flow interchange argument of Matthes, McCann and Savar\'e \cite{mms} in a similar way as in the work of Di Francesco and Matthes \cite{dfm}.

\section{Existence proof}\label{sec-proof}

The proof of Theorem \ref{existporouspot} is divided into three parts. The first two parts concern a priori estimates on $\rho_h$ and the last part consists in showing that passing to the limit in the Euler-Lagrange equation of \pref{semi-imp-jko} actually enables us to recover a solution  of  \pref{nonlineardiffpot}. The discussion on uniqueness is deferred to the final section \ref{sec-uniq}. Of course, it is enough to work on a fixed finite time interval $(0,T)$, which we shall implicitly do below, we thus also set $N:=[\frac{T}{h}]+1$. In what follows $C$ (respectively $C_T$) is a generic  (resp. only depending on $T$) constant whose value may vary from one line to another

\subsection{Basic a priori estimates}

From the very definition of the JKO semi-implicit scheme \pref{semi-imp-jko} we have for every $k$:
\begin{equation}\label{estimjko1}
\frac{1}{2h} W_2^2 (\rho_h^{k+1}, \rho_h^k)\le \E(\rho_h^k)-\E(\rho_h^{k+1}) +\int_{\T^d} U[\rho_h^k] \mbox{d} (\rho_h^k-\rho_h^{k+1}).
\end{equation}
Recall then that the $1$-Wasserstein distance $W_1$ is defined by:
\[W_1(\rho,\nu):=\inf_{\gamma\in \Pi(\rho, \nu) }    \Big\{\int_{\T^d\times \T^d}  d(x,y) \mbox{d} \gamma(x,y) \Big\}, \]
so that by Cauchy-Schwarz's inequality
\[ W_1(\rho,\nu) \le W_2(\rho,\nu).\]
The well-known Kantorovich duality (see \cite{villani, villani2, fs}) states that $W_1$ can be also be expressed as
\[W_1(\rho,\nu)=\sup \Big\{\int_{\T^d} \phi \mbox{d} (\rho-\nu), \; \phi \mbox{ $1$-Lipschitz on $\T^d$}\Big\}\]
so that $\int_{\T^d} \phi \mbox{d} (\rho-\nu)$ is less than the Lipschitz constant of $\phi$ times $W_1(\rho,\nu)$. Thanks to these considerations, assumption \pref{hypv1pot} 
and Young's inequality, we get
\begin{equation}\label{estimjko2}
\int_{\T^d} U[\rho_h^k] \mbox{d} (\rho_h^k-\rho_h^{k+1}) \le C W_2 (\rho_h^{k+1}, \rho_h^k)\le \frac{1}{4h} W_2^2 (\rho_h^{k+1}, \rho_h^k)+ C^2h.
\end{equation}
Together with \pref{estimjko1}, this gives
\begin{equation}\label{estimjko3}
\frac{1}{4h} W_2^2 (\rho_h^{k+1}, \rho_h^k)\le \E(\rho_h^k)-\E(\rho_h^{k+1})+Ch
\end{equation}
summing between $0$ and $N$ and using the fact that $E$ is bounded from below and \pref{hyprho0pot} gives 
\begin{equation}\label{estimjko4}
\frac{1}{4h} \sum_{k=0}^{N-1} W_2^2 (\rho_h^{k+1}, \rho_h^k)\le \E(\rho_0)-\E(\rho_h^{N})+CNh\le C(1+T),
\end{equation}
as well as
\begin{equation}\label{estimjko5}
\E(\rho_h^k)\leq \E(\rho_0)+Ckh,
\end{equation}
which, thanks to \pref{hyprho0pot} and \pref{hypEpot}, also gives
\begin{equation}\label{estimjko6}
\sup_{t\in[0,T]} \Vert \rho_h(t,.)\Vert^m_{L^m} \le C(1+T) \mbox{ if $m>1$},
\end{equation}
and
\begin{equation}\label{estimjko6m=1}
\sup_{t\in[0,T]} \int_{\T^d} \rho_h(t,x)\log(\rho_h(t,x)) \mbox{d} x \le C(1+T), \mbox{ if $m=1$.} 
\end{equation}

With \pref{estimjko4}, we also have the H\"{o}lder like estimate:
\begin{equation}\label{estimjko7}
W_2(\rho_h(t,.), \rho_h(s,.))\le C\sqrt{(1+T)} \sqrt{\vert t-s\vert+h}, \; \forall (s, t) \in [0,T]^2. 
\end{equation}
Using \pref{estimjko7} and refined versions of Ascoli-Arzel\`a Theorem (see \cite{ags}) and \pref{estimjko6}-\pref{estimjko6m=1}, one  deduces the existence of a vanishing sequence $h_n\to 0$ and of a $\rho\in C^{0,1/2}([0,T], (\PT, W_2))\cap L^{\infty}((0,T), L^m(\T^d))$ such that 
\begin{equation}\label{estimjko8}
\rho_{h_n} \wto \rho \mbox{ in $ L^m((0,T)\times \T^d)$},  \lim_n \sup_{t\in [0,T]} W_2(\rho_{h_n}(t,.), \rho(t,.)) = 0.
\end{equation}
Now, using \pref{hypv2pot}, we deduce that $\nabla U[\rho_{h_n}]$ converges to  $\nabla U[\rho]$ in  $L^{\infty}((0,T)\times \T^d)$ and then
\begin{equation}\label{estimjko9}
\rho_{h_n}   \nabla U[\rho_{h_n}]  \wto \rho \nabla U[\rho] \mbox{ in $ L^m((0,T)\times \T^d)$}.
\end{equation}

\subsection{Refined a priori estimates by flow interchange}\label{flowint}

This is the key step in the proof which enables us to obtain strong convergence, in what follows we essentially follow similar arguments as in Di Francesco and Matthes \cite{dfm}. For $\nu\in \PT$, set
\[\ent(\nu):= \begin{cases}  \int_{\T^d} \nu(x) \log(\nu(x)) \mbox{d} x, \mbox{ if $\nu \log(\nu)\in L^1$},\\+\infty, \mbox{ otherwise}. \end{cases}\]
For $\nu\in \PT$ with $\ent(\nu)<+\infty$ let us denote by $e^{t\Delta} \nu:=\eta(t,.)$, the solution at time $t$ of the heat equation:
\begin{equation}
\partial_t \eta=\Delta \eta, \; \eta\mid_{t=0}=\nu.
\end{equation} 
It is well-known since the seminal work of \cite{jko} that the heat flow can be viewed as the gradient flow of $\ent$ for $W_2$ (see \cite{ags} for the theory of gradient flows in metric spaces). Moreover the fact that $\ent$ is \emph{displacement convex}\footnote{See section \ref{sec-uniq} for a precise definition. Here, we are working on $\T^d$, but we should not worry about it, it is just if we were working on $\R^d$ with periodic functions only. The optimal transport map between absolutely continuous periodic measures is well-known, it is given by the gradient of a convex function $F$ such that $F(x)-\frac{\vert x\vert^2}{2}$ is periodic (see Cordero-Erausquin \cite{cordero}) and which  is characterized by a Monge-Amp\`ere equation. Displacement convexity of the entropy on the $\T^d$ can therefore be proved as in the euclidean case. Another way to see this is to remark that  Bochner's formula on $\T^d$ is just the same as in $\R^d$, this does not change the Ricci curvature and thus, thanks to a celebrated result of Lott and Villani \cite{lv} and Sturm \cite{sturm} (see in particular the proof of Theorem 4.9), this  does not change the displacement convexity of $\ent$ (with respect to Lebesgue's measure).}, gives (see \cite{ags} Theorem 11.1.4, \cite{sd}, \cite{ow}), the following evolution variational equality:
\begin{equation}\label{evi}
 \frac{1}{2}\frac{d^+}{dt} W_2^2(e^{t\Delta} \nu, \mu) \le \ent(\mu)-\ent(e^{t\Delta} \nu), \; \forall t\ge 0, \; \mu\in \PT
\end{equation}
where we have used the notation:
\[\frac{d^+}{dt} f(t)=\limsup_{s\to 0^+} \frac{f(t+s)-f(t)}{s}.\]
Taking $e^{t\Delta} \rho_h^{k+1}$ as a competitor in the minimization \pref{semi-imp-jko} gives
\begin{equation}
\label{variationpbmin}
\begin{split} 0\le \frac{1}{2h}\frac{d^+}{dt} \Big(W_2^2(e^{t\Delta} \rho_h^{k+1}, \rho_h^k)\Big)_{\vert_{t=0} }+ \frac{d^+}{dt} \Big(\E(e^{t\Delta} \rho_h^{k+1}) \Big)_{\vert_{t=0}} \\
 +  \frac{d^+}{dt} \Big(\U(e^{t\Delta} \rho_h^{k+1}\vert \rho_h^k)\Big)_{\vert_{t=0} }.
 \end{split}
\end{equation}

Since $e^{t\Delta} \rho_h^{k+1}$ is smooth for $t>0$, we can directly compute:
\[\frac{d}{dt} (\E(e^{t\Delta} \rho_h^{k+1}))=-\int_{\T^d} E''(e^{t\Delta} \rho_h^{k+1}) \vert \nabla (e^{t\Delta} \rho_h^{k+1} )\vert^2 \mbox{ dx}\]

which, with \pref{hypEpot} gives that for some positive constant $\lambda>0$
\begin{equation*}
\frac{d}{dt} (\E(e^{t\Delta} \rho_h^{k+1}))  \le  - \lambda \int_{\T^d} \vert \nabla (  (e^{t\Delta} \rho_h^{k+1} )^{\frac{m}{2}} ) \vert^2.
\end{equation*}
We then have 
\begin{equation}
\label{variationenergie}
-\frac{d^+}{dt} \Big(\E(e^{t\Delta} \rho_h^{k+1})\Big) \ge  \lambda \liminf_{s\to 0^+}  \int_0^1  \int_{\T^d} \vert \nabla (  (e^{ts\Delta} \rho_h^{k+1} )^{\frac{m}{2}} ) \vert^2 \mbox{d}x \mbox{d}t.
\end{equation}

In a similar way, for $t>0$,  we have 
\[\frac{d}{dt} \Big(\U(e^{t\Delta} \rho_h^{k+1}\vert \rho_h^k)\Big)=\int_{\T^d} \Delta (U[\rho_h^k]) e^{t\Delta} \rho_h^{k+1}\] 
and the right hand side is uniformly bounded from above thanks to  \pref{hypv1pot}.  With \pref{evi}-\pref{variationpbmin}-\pref{variationenergie} this gives:
\[\lambda h \liminf_{s\to 0^+}  \int_0^1  \int_{\T^d} \vert \nabla (  (e^{ts\Delta} \rho_h^{k+1} )^{\frac{m}{2}} ) \vert^2 \mbox{d}x \mbox{d}t \le (\ent(\rho_h^k)-\ent(\rho_h^{k+1})) +Ch.\]
 Since $e^{s\Delta} \rho_h^{k+1}$ converges strongly to  $\rho_h^{k+1}$ in $L^m$ as $s\to 0^+$, $(e^{s\Delta} \rho_h^{k+1})^{\frac{m}{2}}$ converges strongly to  $(\rho_h^{k+1})^{\frac{m}{2}}$ in $L^2$. By lower semicontinuity we deduce that $\nabla (\rho_h^{k+1} )^{\frac{m}{2}}\in L^2$ and 
\begin{equation}\label{estimjko11}
h \int_{\T^d} \vert \nabla (  \rho_h^{k+1} )^{\frac{m}{2}} ) \vert^2 \mbox{d}x  \le C(\ent(\rho_h^k)-\ent(\rho_h^{k+1}) +h).
\end{equation}
Summing from $k=0$ to $N-1$ and using the fact that $\ent(\rho_0)$ is finite gives 
\[\int_{0}^T \int_{\T^d}  \vert \nabla (  \rho_h )^{\frac{m}{2}} ) \vert^2 \mbox{d}x \mbox{d} t\le CNh +C(\ent(\rho_0)-\ent(\rho_h^N))\le C(1+T)\]
which, with \pref{estimjko6}, also gives 
\begin{equation}\label{sobest}
\int_{0}^T  \Vert \rho_h(t,.)^{\frac{m}{2}} \Vert^2_{H^1(\T^d)}  \mbox{d} t  \le C_T.
\end{equation}
We then observe that since the injection of $H^1(\T^d)$ in $L^2(\T^d)$ is compact and since $\eta\mapsto \eta^{\frac{2}{m}}$ maps continuously $L^2(\T^d)$ into $L^m(\T^d)$, sublevel sets of $\rho \mapsto  \Vert \rho_h(t,.)^{\frac{m}{2}} \Vert_{H^1(\T^d)}$ are strongly relatively compact in $L^m(\T^d)$. Now arguing as in Di Francesco and Matthes \cite{dfm} i.e. using the refined version of Aubin-Lions Lemma provided by Theorem 2  of Rossi and Savar\'e \cite{rs} gives that the family $(\rho_h)_h$ is relatively compact in $L^m((0,T)\times \T^d)$. The conclusion of this step is that \pref{estimjko8} can be strenghtened to 
\begin{equation}\label{estimjko10}
\rho_{h_n} \to \rho \mbox{ strongly in $ L^m((0,T)\times \T^d)$}.
\end{equation}
Now, thanks to the second part of \pref{hypEpot}  and Krasnoselskii's Theorem (see \cite{dfig}, chapter 2) $\rho\mapsto F'(\rho)$ is continuous from $L^m$ to $L^1$ and then \pref{estimjko10} implies that
\begin{equation}\label{estimjko100}
F'(\rho_{h_n}) \to F'(\rho) \mbox{ strongly in $ L^1((0,T)\times \T^d)$}.
\end{equation}

\subsection{Passing to the limit in the Euler-Lagrange equation}

Now, we write the Euler-Lagrange equation for \pref{semi-imp-jko} as in the seminal work of Jordan, Kinderlehrer and Otto \cite{jko} for the Fokker-Planck equation and Otto \cite{otto} for nonlinear diffusions. Let $\xi$ be a $C^\infty$ vector field on $\T^d$ and denote by $X_t$ the flow of $\xi$:
\[\frac{d}{dt} X_t(x)=\xi(X_t(x)), \; X_0(x)=x.\]
Define then $\nu_t:={X_t}_{\#}\rho_h^{k+1}$ so that the change of variables formula gives 
\begin{equation}\label{jacobien}
\rho_h^{k+1}=\nu_t (X_t) \det(DX_t).
\end{equation} 
Since $\rho_h^{k+1}$ solves \pref{semi-imp-jko}, we have
\begin{equation}\label{derivpos}
0\le \frac{1}{2h}\frac{d^+}{dt} \Big(W_2^2(\nu_t, \rho_h^k)\Big)_{\vert_{t=0} } + \frac{d^+}{dt} \Big(\E(\nu_t)\Big)_{\vert_{t=0} }  +\frac{d^+}{dt} \Big(\U(\nu_t \vert\rho_h^k)\Big)_{\vert_{t=0} }.
\end{equation}
Let then $\gamma_h^k\in \Pi(\rho_h^k, \rho_h^{k+1})$ be such that 
\[W_2^2(\rho_h^{k+1}, \rho_h^k)= \int_{\T^d\times \T^d} d^2(x,y) \mbox{d} \gamma_h^k(x,y).\]
  Choosing $k(x,y)\in \Z^d$ such that  $d^2(x,y) =\vert y+k(x,y)- x\vert^2$,  by definition of $\nu_t$, we then have
\[W_2^2(\nu_t, \rho_h^k)\le \int_{\T^d\times \T^d} \vert X_t(y)+k(x,y)- x\vert^2 \mbox{d} \gamma_h^k(x,y)\]
and since $X_t(y)=y+t\xi(y)+o(t)$, we get
\begin{equation}\label{derivestim1}
\frac{1}{2h}\frac{d^+}{dt} \Big(W_2^2(\nu_t, \rho_h^k)\Big)_{\vert_{t=0} } \le \int_{\T^d\times \T^d} \xi(y)\cdot \Big(\frac{y-x}{h}\Big) \mbox{d} \gamma_h^k(x,y). 
\end{equation}
As for the differentiation of $\E(\nu_t)$, following \cite{jko}-\cite{otto}, using \pref{jacobien} we write
\[\E(\nu_t)=\int_{\T^d} E\Big(\frac{\rho_h^{k+1}}{\det(DX_t)}\Big)\det (DX_t)\]
observing that for $\rho\ge 0$ and $\alpha>0$  
\[\frac{d}{d\alpha} ( E\Big(\frac{\rho}{\alpha} \Big)\alpha)=-F'\Big(\frac{\rho}{\alpha} \Big)\]
then thanks to \pref{hypEpot}, \pref{estimjko6}, the fact that $\det(X_t)=1+t\dive(\xi)+o(t)$ (with a uniform  $o(t)$) and Lebesgue's dominated convergence Theorem, one obtains:
\begin{equation}\label{derivestim2}
\frac{d^+}{dt} \Big(\E(\nu_t)\Big)_{\vert_{t=0} }=-\int_{\T^d} F'(\rho_h^{k+1}) \dive(\xi).
\end{equation}
In a similar way, 
\begin{equation}\label{derivestim3}
\frac{d^+}{dt} \Big(\U(\nu_t)  \Big)_{\vert_{t=0} }=\int_{\T^d} \nabla U [\rho_h^k]\cdot \xi \rho_h^{k+1}.
\end{equation}
Combining \pref{derivestim1}-\pref{derivestim2}-\pref{derivestim3} and \pref{derivpos}, and applying the previous to both $\xi$ and $-\xi$ gives the Euler-Lagrange equation
\begin{equation}\label{eul-lag}
 \int_{\T^d\times \T^d} \xi(y)\cdot (x-y) \mbox{d} \gamma_h^k(x,y)=-h\int_{\T^d} F'(\rho_h^{k+1}) \dive(\xi)+h\int_{\T^d} \nabla U [\rho_h^k]\cdot \xi \rho_h^{k+1}.
\end{equation}
for every smooth vector field $\xi$. Now let $\phi\in C_c^{\infty}([0, T)\times \T^d)$ (which we extend by $\phi(0,.)$ on $(-h,0)$), we then have  
\[\begin{split}
\int_0^T \int_{\T^d} \partial_t \phi \rho_h&=\sum_{k=0}^N \int_{\T^d} (\phi(kh,.)-\phi((k-1)h,.)) \rho_h^k\\
&=\sum_{k=0}^{N} \int_{\T^d} \phi(kh,.)(\rho_h^k-\rho_h^{k+1})  -\int_{\T^d} \phi(0,.) \rho_0\\
&=\sum_{k=0}^{N} \int_{\T^d\times \T^d} (\phi(kh,x)- \phi(kh,y))\mbox{d} \gamma_h^k(x,y) -\int_{\T^d} \phi(0,.) \rho_0.
\end{split}\]
Using a second order Taylor-Lagrange formula gives
\[\int_{\T^d\times \T^d} (\phi(kh,x)- \phi(kh,y))\mbox{d} \gamma_h^k(x,y)=\int_{\T^d\times \T^d} \nabla \phi(kh,y)\cdot (x-y)\mbox{d} \gamma_h^k(x,y)+R_h^k\]
with
\[\vert R_h^k \vert \le \Vert D^2 \phi\Vert_{L^\infty} W_2^2(\rho_h^{k+1}, \rho_h^k).\]
With \pref{estimjko3} this gives that $\sum_{k=0}^N \vert R_h^k\vert \le C_T h$, so  that applying  \pref{eul-lag} to $\xi=\nabla \phi(kh,.)$, and using the fact that, with \pref{hypv2pot}, Cauchy-Schwarz inequality and \pref{estimjko4},  $\sum_{k=0}^N \Vert \nabla U[\rho_h^k]-\nabla U[\rho_h^{k+1}]\Vert_{L^\infty}\le C_T$, we finally get
\[\begin{split}
\int_0^T \int_{\T^d} \partial_t \phi \rho_h =&-\int_0^T \int_{\T^d} F'(\rho_h)\Delta \phi\\
&+\int_0^T \int_{\T^d} \nabla U[\rho_h] \cdot \nabla \phi \rho_h -\int_{\T^d} \phi(0,.) \rho_0 +\delta_h
\end{split}\]
where $\delta_h$ goes to zero as $h\to 0$. Thanks to \pref{estimjko9}, \pref{estimjko10} and \pref{estimjko100} we may pass to the limit on the vanishing sequence $h_n$ to find that the limit $\rho$ is a solution of \pref{nonlineardiffpot}. This completes the proof of Theorem \ref{existporouspot}.

\section{Extension to systems}\label{sec-systems}

Let us now consider the extension of \pref{nonlineardiffpot} to systems for the evolution of $l$ densities $\rho:=(\rho_1,\ldots, \rho_l)$ of  interacting species:
\begin{equation}\label{systemspot}
\partial_t \rho_i= \dive(\rho_i  \nabla (E'_i(\rho_i)+ U_i[\rho]))=0, \; \rho_{i}\vert_{t=0}=\rho_{i,0}, \; i=1,\ldots, l, 
\end{equation}
 on $(0,+\infty)\times \T^d$.  Let us assume that for every  $i=1,\ldots, l$  the map $\rho \in \PT^l\mapsto U_i(\rho)$ fullfills
\begin{equation}\label{hypv0potsyst}
\forall \rho \in \PT^l,   U_i[\rho] \in W^{2, \infty}(\T^d),  \mbox{ and }  U_i[\rho]\ge 0, 
\end{equation}
\begin{equation}\label{hypv1potsyst}
  \sup_{\rho \in \PT^l} \{  \Vert \nabla U_i[\rho] \Vert_{L^\infty} +  \Vert (\Delta  U_i[\rho])_+ \Vert_{L^\infty} \}<+\infty
\end{equation}
and there exists a constant $C$ such that for all $(\rho, \nu)=((\rho_1,\ldots, \rho_l), (\nu_1,\ldots, \nu_l))\in \PT^l\times \PT^l$ and every $i$
\begin{equation}\label{hypv2potsyst}
\Vert \nabla U_i[\rho]-\nabla U_i [\nu]\Vert_{L^{\infty}(\T^d)} \le C\sum_{j=1}^l W_2(\rho_j,\nu_j).
\end{equation}
As in the previous section, we  assume that for each $i$, $E_i$ is a continuous convex function on $\R_+$ with $E_i(0)=0$, $E_i$ is of class $C^2$ on $(0,+\infty)$ and that there are constants  $C>0$ and $m_i\ge 1$ such that
\begin{equation}\label{hypEpotsyst}
E_i''(t)\ge \frac{t^{m_i-2}}{C}, \; tE_i'(t)-E_i(t)\le C(1+t^{m_i}), \; \forall t\in (0,+\infty).
\end{equation}
Finally we assume that the initial condition $\rho_0\in \PT$ satisfies
\begin{equation}\label{hyprho0potsyst}
\sum_{i=1}^l \int_{\T^d} E_i(\rho_{i,0}(x)) \mbox{ d} x<+\infty
\end{equation}
which with \pref{hypEpotsyst} in particular implies that $\rho_{i,0}\in L^{m_i}(\T^d)$. The semi-implicit JKO scheme then takes the following form: given a time step $h>0$, we construct inductively a sequence $\rho_h^k\in \PT^l$ by setting $\rho_h^0=\rho_0$ and, given $\rho_h^k\in \PT^l$ we select $\rho_h^{k+1}$ as a solution of 
\begin{equation}\label{semi-imp-jkosyst}
\inf_{\rho\in \PT^l} \Big\{\frac{1}{2h}  \sum_{i=1}^l  W_2^2 (\rho_i, \rho_{i,h}^k)+ \E(\rho)+ \U(\rho\vert \rho_{h}^k)\Big\}
\end{equation}
where
\[\E(\rho):= \begin{cases} \sum_{i=1}^l \int_{\T^d} E_i(\rho_i(x)) \mbox{ d} x, \; \mbox{ if $E_i(\rho_i)\in L^1$},\\+\infty, \mbox{ otherwise,} \end{cases}\]
and
 \[\U(\rho\vert \nu):=  \sum_{i=1}^l  \int_{\T^d} U_i[\nu] \mbox{d} \rho_i.\]
Extending in a piecewise constant way the sequence $(\rho_h^k)_{k\in \N}$  defines the $\PT^l$-valued curve:
\begin{equation}
\rho_h(t,.):=\rho_h^k \mbox{ for $t\in ((k-1)h, kh]$ and $k\in \N$}.
\end{equation}
Arguing exactly as in the proof detailed in section \ref{sec-proof}, there is strong convergence  in $\Pi_{i=1}^l L^{m_i}((0, T)\times \T^d)$ of a sequence  $\rho_{h_n}$ to some limit  curve $\rho$ and passing to the limit in the Euler-Lagrange for \pref{semi-imp-jkosyst} exactly gives: 

\begin{thm}\label{existporouspotsyst}
Assume \pref{hypv0potsyst}-\pref{hypv1potsyst}-\pref{hypv2potsyst}-\pref{hypEpotsyst}-\pref{hyprho0potsyst}, then \pref{systemspot} admits at least one weak solution. 
\end{thm}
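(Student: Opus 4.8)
The plan is to mirror the single-equation proof of Theorem \ref{existporouspot} line by line, carrying all estimates componentwise and summing the resulting inequalities over $i=1,\ldots,l$. First I would record the basic a priori estimates: from the very definition of the scheme \pref{semi-imp-jkosyst}, taking $\rho_h^k$ itself as a competitor gives
\[
\frac{1}{2h}\sum_{i=1}^l W_2^2(\rho_{i,h}^{k+1},\rho_{i,h}^k)\le \E(\rho_h^k)-\E(\rho_h^{k+1})+\sum_{i=1}^l\int_{\T^d}U_i[\rho_h^k]\,\mbox{d}(\rho_{i,h}^k-\rho_{i,h}^{k+1}),
\]
and, exactly as in \pref{estimjko2}, the Kantorovich--Rubinstein duality for $W_1$ together with \pref{hypv1potsyst} and Young's inequality absorbs the last sum into $\frac{1}{4h}\sum_i W_2^2(\rho_{i,h}^{k+1},\rho_{i,h}^k)+Ch$. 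Summing over $k$ and using \pref{hyprho0potsyst} and the lower bound on each $E_i$ yields $\sum_i\E$-control, hence, via \pref{hypEpotsyst}, uniform bounds on $\sup_{t}\|\rho_{i,h}(t,.)\|_{L^{m_i}}^{m_i}$ (or the entropy when $m_i=1$), and the Hölder-in-time estimate $W_2(\rho_{i,h}(t,.),\rho_{i,h}(s,.))\le C\sqrt{(1+T)}\sqrt{|t-s|+h}$ for each $i$. An Ascoli--Arzelà argument then extracts $h_n\to0$ and a limit curve $\rho=(\rho_1,\ldots,\rho_l)\in\prod_i C^{0,1/2}([0,T],(\PT,W_2))\cap L^\infty((0,T),L^{m_i})$ with $\rho_{i,h_n}\weakto\rho_i$ in $L^{m_i}$ and uniform-in-$t$ $W_2$-convergence; assumption \pref{hypv2potsyst} then gives $\nabla U_i[\rho_{h_n}]\to\nabla U_i[\rho]$ in $L^\infty$ and hence $\rho_{i,h_n}\nabla U_i[\rho_{h_n}]\weakto\rho_i\nabla U_i[\rho]$ in $L^{m_i}$.

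Next I would upgrade to strong convergence by the flow-interchange argument of Section \ref{flowint}. The key observation is that the objective in \pref{semi-imp-jkosyst} decouples additively over the components, so one may use $(\rho_{1,h}^{k+1},\ldots,e^{t\Delta}\rho_{i,h}^{k+1},\ldots,\rho_{l,h}^{k+1})$ — perturbing only the $i$-th slot by the heat flow — as a competitor. Since $W_2^2(\cdot,\rho_{j,h}^k)$ is unaffected for $j\ne i$ and $\E$, $\U(\cdot|\rho_h^k)$ split as sums, the variational inequality reduces to exactly the one-component computation: the EVI \pref{evi} for $\ent$, the dissipation identity $\frac{d}{dt}\E_i(e^{t\Delta}\rho_{i,h}^{k+1})=-\int E_i''(e^{t\Delta}\rho_{i,h}^{k+1})|\nabla e^{t\Delta}\rho_{i,h}^{k+1}|^2\le -\lambda_i\int|\nabla(e^{t\Delta}\rho_{i,h}^{k+1})^{m_i/2}|^2$ via \pref{hypEpotsyst}, and the uniform bound $\frac{d}{dt}\U_i=\int\Delta(U_i[\rho_h^k])e^{t\Delta}\rho_{i,h}^{k+1}\le C$ via \pref{hypv1potsyst}. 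Summing over $k$ and using finiteness of $\ent(\rho_{i,0})$ (which follows from \pref{hyprho0potsyst} and \pref{hypEpotsyst} just as in the scalar case) produces $\int_0^T\|\rho_{i,h}(t,.)^{m_i/2}\|_{H^1(\T^d)}^2\,\mbox{d}t\le C_T$ for each $i$. The refined Aubin--Lions lemma of Rossi and Savaré \cite{rs}, applied componentwise exactly as in \cite{dfm}, then gives relative compactness of $(\rho_{i,h})_h$ in $L^{m_i}((0,T)\times\T^d)$, so $\rho_{i,h_n}\to\rho_i$ strongly; by the second half of \pref{hypEpotsyst} and Krasnoselskii's theorem, $F_i'(\rho_{i,h_n})\to F_i'(\rho_i)$ strongly in $L^1$.

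Finally I would pass to the limit in the Euler--Lagrange equations. Because \pref{semi-imp-jkosyst} decouples in $\rho_i$, varying only the $i$-th component along the flow $X_t$ of a smooth vector field $\xi$ — with $\nu_t:=((\rho_{1,h}^{k+1}),\ldots,(X_t)_\#\rho_{i,h}^{k+1},\ldots)$ — reproduces the scalar Euler--Lagrange identity
\[
\int_{\T^d\times\T^d}\xi(y)\cdot(x-y)\,\mbox{d}\gamma_{i,h}^k(x,y)=-h\int_{\T^d}F_i'(\rho_{i,h}^{k+1})\dive(\xi)+h\int_{\T^d}\nabla U_i[\rho_h^k]\cdot\xi\,\rho_{i,h}^{k+1},
\]
where $\gamma_{i,h}^k$ is an optimal plan between $\rho_{i,h}^k$ and $\rho_{i,h}^{k+1}$. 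Taking $\phi\in C_c^\infty([0,T)\times\T^d)$, $\xi=\nabla\phi(kh,\cdot)$, telescoping in $k$, controlling the second-order Taylor remainders by $\sum_k\|D^2\phi\|_{L^\infty}W_2^2(\rho_{i,h}^{k+1},\rho_{i,h}^k)\le C_Th$ and the drift-increment error by $\sum_k\|\nabla U_i[\rho_h^k]-\nabla U_i[\rho_h^{k+1}]\|_{L^\infty}\le C_T$ (using \pref{hypv2potsyst}, Cauchy--Schwarz and the summed $W_2^2$ bound) gives, for each $i$, the discrete weak formulation up to a term $\delta_{i,h}\to0$; the convergences above then yield that $\rho$ solves \pref{systemspot} in the weak sense. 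I do not expect any genuinely new obstacle here: the only point that requires a word of care is that the flow-interchange and Euler--Lagrange perturbations must be performed one component at a time, which is legitimate precisely because all three functionals in \pref{semi-imp-jkosyst} are separable in $(\rho_1,\ldots,\rho_l)$ (the coupling enters only through the frozen argument $\rho_h^k$ in $\U$), so the cross-interaction terms never obstruct the single-species estimates.
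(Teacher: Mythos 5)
Your proposal is correct and follows exactly the route the paper intends: the paper's proof of Theorem \ref{existporouspotsyst} simply says to argue as in Section \ref{sec-proof}, and your componentwise adaptation (summed basic estimates, flow interchange and Euler--Lagrange perturbations performed one species at a time, exploiting that the scheme \pref{semi-imp-jkosyst} is separable with coupling only through the frozen $\rho_h^k$ in $\U$) is precisely that adaptation. No gaps to report.
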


\section{The non potential case} \label{sec-nonpot}

We are now interested in the case where the drift may not be a gradient. More precisely, we consider the following nonlinear diffusion equation:
\begin{equation}\label{nonlineardiff}
\partial_t \rho-\dive(\rho  \nabla (E'(\rho)))+\dive(\rho V[\rho])=0, \; \rho\vert_{t=0}=\rho_0,
\end{equation}
on $(0,T)\times \T^d$.  Denoting by $H^{-1}(\T^d)$ the dual of $H^1(\T^d)$, we assume the following  regularity on the drift term $V[\rho]$:
\begin{equation}\label{hypv1}
\forall \rho \in L^2 \cap \PT,   V[\rho] \in W^{1, \infty}, \; \sup_{\rho\in L^2\cap \PT} \{  \Vert V[\rho] \Vert_{L^\infty} +  \Vert \nabla  V[\rho] \Vert_{L^\infty} \}<+\infty
\end{equation}
and for every $R>0$, there exists a modulus $\omega_R$ such that, for every $(\rho, \nu)\in (L^2(\T^d)\cap \PT)^2$ such that $\Vert \rho\Vert_{H^{-1}(\T^d)}\le R$ and $\Vert \nu \Vert_{H^{-1}(\T^d)}\le R$, one has
\begin{equation}\label{hypv2}
\Vert V[\rho]-V[\nu]\Vert_{L^2(\T^d)} \le \omega_R(\Vert \rho-\nu \Vert_{H^{-1}(\T^d)}).
\end{equation}
Typical examples of velocity fields $\rho\mapsto V[\rho]$ that satisfy the above assumptions are those of the form $V[\rho](x)=\int_{\T^d} B(x,y) \rho(y)dy$ with $B$ smooth enough (but not necessarily a gradient with respect to $x$).

As before, $E$ is convex on $\R_+$ and we define $F'(t):=tE'(t)-E(t)$ so that $F''(t)=tE''(t)$. We make the following assumptions on $F$ (which are satisfied for instance when $E(t)=t^m$ with $m> 1$ or $E(t)=t\log(t)$):
\begin{equation}\label{hypF1}
F\in C^2(\R_+, \R), F(0)=F'(0)=0, \mbox{ $F$ is convex}, 
\end{equation}
\begin{equation}\label{hypF2}
\mbox{$F''$ is nondecreasing, and  for every $\rho>0$, $F''(\rho)>0$}
\end{equation}
and there is a constant $C>0$ such that
\begin{equation}\label{hypF3}
F'(\rho)\le C(1+\rho^2 +F(\rho)), \; \forall \rho\in \R_+.
\end{equation}

As for the initial condition $\rho_0$ we assume that it is a probability density such that
\begin{equation}\label{hyprho0}
\rho_0\in H^1(\T^d), \; F(\rho_0)\in L^1(\T^d), \; F'(\rho_0)\in H^1(\T^d).
\end{equation}

A nonnegative weak solution of the PDE 
\begin{equation}\label{nonlineardiff2}
\partial_t \rho-\Delta(F'(\rho))+\dive(\rho V[\rho])=0, \; \rho\vert_{t=0}=\rho_0.   
\end{equation}
is by definition a function $\rho\in L^2((0,T)\times \T^d, \R_+)$ such that 
\[F'(\rho)\in L^2((0,T), H^1(\T^d))\]
and
\begin{equation}\label{weaksol} 
\int_0^T\int_{\T^d} \left( -\partial_t \phi \rho +\nabla F'(\rho) \cdot \nabla \phi-\rho V[\rho] \cdot \nabla \phi \right) \mbox{d} x \mbox{d} t=\int_{\T^d} \phi(0, x) \rho_0(x) \mbox{d} x
\end{equation}
for every $\phi\in C^1([0,T]\times \T^d)$ such that $\phi(T,.)=0$.

Before we proceed to the existence proof, we need some preliminary results. Let us first study the continuity of the drift term $\rho=\rho(t,x)\mapsto V[\rho(t,.)](x)$. It is easy to see that when  \pref{hypv1} and \pref{hypv2} are satisfied and $\rho^n$ converges strongly in $L^2((0,T)\times \T^d)$ (hence in $L^2((0,T), H^{-1}(\T^d)$) to some $\rho$ then $V[\rho^n]$ converges to $V[\rho]$ in $L^2((0,T)\times \T^d)$, but we wil need a variant in the sequel:

\begin{lem}\label{reguvt}
Assume that \pref{hypv1} and \pref{hypv2} are satisfied. Let $\rho^n$ be a sequence in $L^2((0,T)\times \T^d)$ such that $\partial_t \rho^n \in L^2((0,T), H^{-1}(\T^d))$  
with 
 \begin{equation}\label{boundh-1}
 \sup_n \Vert \partial_t \rho^n \Vert_{L^2((0,T), H^{-1}(\T^d))}<+\infty,
 \end{equation} 
 and $\rho\in L^2((0,T)\times \T^d)$ such that $\rho^n \wto \rho$ in $L^2((0,T)\times \T^d)$, then $V[\rho^n]$ converges to $V[\rho]$ strongly in $L^2((0,T)\times \T^d)$.
\end{lem}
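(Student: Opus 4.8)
The plan is to combine the weak convergence $\rho^n \wto \rho$ in $L^2$ with the uniform bound on $\partial_t \rho^n$ in $L^2((0,T),H^{-1})$ to upgrade the convergence to a \emph{strong} convergence in $L^2((0,T),H^{-1}(\T^d))$, and then feed this into the quantitative modulus-of-continuity assumption \pref{hypv2}. First I would observe that the sequence $(\rho^n)$ is bounded in $L^2((0,T)\times\T^d)$ (a weakly convergent sequence in a Hilbert space is bounded), hence bounded in $L^2((0,T),L^2(\T^d))$, and $L^2(\T^d)$ injects compactly into $H^{-1}(\T^d)$ (this is the dual statement of the compact injection $H^1(\T^d)\hookrightarrow L^2(\T^d)$). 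Together with the uniform bound \pref{boundh-1} on the time derivatives, the Aubin--Lions--Simon lemma gives that $(\rho^n)$ is relatively compact in $C([0,T],H^{-1}(\T^d))$, in particular in $L^2((0,T),H^{-1}(\T^d))$. Since $\rho^n \wto \rho$ in $L^2((0,T)\times\T^d)$ already forces $\rho^n \wto \rho$ in $L^2((0,T),H^{-1}(\T^d))$, any strong limit point of $(\rho^n)$ in that space must coincide with $\rho$; by the usual subsequence argument the whole sequence converges:
\[
\rho^n \to \rho \mbox{ strongly in } L^2((0,T),H^{-1}(\T^d)).
\]

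Next I would conclude using \pref{hypv2}. Set $R:=\sup_n \Vert \rho^n\Vert_{L^2((0,T),L^2(\T^d))}+\Vert\rho\Vert_{L^2((0,T),L^2(\T^d))}<+\infty$, which also bounds the relevant $H^{-1}$ norms. For a.e.\ $t$, applying \pref{hypv2} with the (time-dependent) radius controlled by $R$ and integrating in time,
\[
\int_0^T \Vert V[\rho^n(t,.)]-V[\rho(t,.)]\Vert_{L^2(\T^d)}^2 \mbox{d}t \le \int_0^T \omega_R\big(\Vert \rho^n(t,.)-\rho(t,.)\Vert_{H^{-1}(\T^d)}\big)^2 \mbox{d}t.
\]
Since $\Vert \rho^n(t,.)-\rho(t,.)\Vert_{H^{-1}(\T^d)} \to 0$ in $L^2(0,T)$, hence a.e.\ along a subsequence, and since $\omega_R$ is a modulus (so $\omega_R(0^+)=0$ and $\omega_R$ is bounded on the bounded range of the arguments, which follows from \pref{hypv1}), the right-hand side tends to $0$ by dominated convergence; a subsequence-of-every-subsequence argument then gives convergence of the full sequence. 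This yields $V[\rho^n]\to V[\rho]$ strongly in $L^2((0,T)\times\T^d)$.

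The main obstacle is the first step, namely justifying the strong $L^2((0,T),H^{-1})$ convergence: one must invoke the correct version of the Aubin--Lions lemma (e.g.\ with $H^{-1}$ as the ``pivot'' space into which $L^2$ injects compactly) and be a little careful that $\partial_t\rho^n$ lives in $L^2((0,T),H^{-1})$ rather than merely in a space of measures, so that the hypotheses of Aubin--Lions are genuinely met; the rest is a routine passage to the limit using the modulus of continuity, where the only point to watch is the measurability and integrability of $t\mapsto \omega_R(\Vert\rho^n(t,.)-\rho(t,.)\Vert_{H^{-1}})$, handled by monotone majorization of $\omega_R$ by a continuous modulus if needed.
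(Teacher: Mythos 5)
Your proposal is correct in substance and reaches the same two-stage structure as the paper (upgrade weak $L^2$ convergence to strong $H^{-1}$-in-space convergence using the bound \pref{boundh-1}, then conclude via \pref{hypv2} and dominated convergence with the uniform bound \pref{hypv1}), but it takes a different route to the first stage: you invoke the Aubin--Lions--Simon compactness lemma with the triple $L^2(\T^d)\hookrightarrow H^{-1}(\T^d)= H^{-1}(\T^d)$ to get strong convergence in $L^2((0,T),H^{-1}(\T^d))$ and then work at a.e.\ $t$, whereas the paper proves the stronger statement $\Vert \rho^n(t,.)-\rho(t,.)\Vert_{H^{-1}}\to 0$ for \emph{every} $t$ by hand: the bound \pref{boundh-1} gives the $\frac12$-H\"older estimate in $H^{-1}$, one compares $\rho^n(t,.)$ with its time average over $(t-h,t)$, and for fixed $h$ the averages converge strongly in $H^{-1}$ by the compact embedding $L^2\hookrightarrow H^{-1}$. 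The paper's argument is thus a self-contained, quantitative version of exactly the compactness you quote, and it yields pointwise-in-time convergence without any subsequence extraction; your version is shorter but leans on the citation.

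Two small points in your write-up should be repaired, though neither is a real gap. First, relative compactness in $C([0,T],H^{-1}(\T^d))$ does not follow from the stated hypotheses (Simon's result in that form needs a bound in $L^\infty((0,T),L^2(\T^d))$, which you do not have); you should only claim, and only need, relative compactness in $L^2((0,T),H^{-1}(\T^d))$, which the standard Aubin--Lions statement with $p=2$ does give. Second, your radius $R$ built from the space-time $L^2$ norms does not control $\Vert\rho^n(t,.)\Vert_{H^{-1}}$ at a fixed time $t$; to apply \pref{hypv2} pointwise you should either derive the uniform-in-$t$ bound $\Vert\rho^n(t,.)\Vert_{H^{-1}}\le \frac{1}{\sqrt T}\Vert\rho^n\Vert_{L^2((0,T),H^{-1})}+C\sqrt T$ from the H\"older estimate coming from \pref{boundh-1}, or simply note that at each $t$ where $\Vert\rho^n(t,.)-\rho(t,.)\Vert_{H^{-1}}\to 0$ the sequence is bounded in $H^{-1}$, so a (possibly $t$-dependent) radius suffices, the domination in the time integral being supplied by the $L^\infty$ bound in \pref{hypv1} rather than by $\omega_R$.
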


\begin{proof}
First observe that  \pref{boundh-1} implies that for some constant $C$ one has
\begin{equation}\label{holderrho}
\Vert \rho^n(t,.)-\rho^n(s,.)  \Vert_{H^{-1} } \le C \sqrt{\vert t-s\vert}, \; \forall n, \; \forall (t,s)\in (0,T)^2.
\end{equation}
Let $t\in (0,T)$ and for $h\in (0,t)$ define
\[\overline{\rho}^n_{t,h}(x):=\frac{1}{h} \int_{t-h}^{t} \rho^n(s,x) ds, \; \overline{\rho}_{t,h}:=\frac{1}{h} \int_{t-h}^{t} \rho(s,x)ds\]
thanks to \pref{holderrho}, we obtain, for every $n$, $t$ and $h$: 
\begin{equation}
\Vert \rho^n(t,.)-\overline{\rho}^n_{t,h}\Vert_{H^{-1}}\leq C \sqrt{h}, \; \Vert \rho (t,.)-\overline{\rho}_{t,h}\Vert_{H^{-1}}\leq C \sqrt{h}.
\end{equation}
For fixed $h>0$, $\overline{\rho}^n_{t,h}\wto \overline{\rho}_{t,h}$ in $L^2(\T^d)$   as $n\to \infty$, and since the imbedding of $L^2(\T^d)$ into $H^{-1}(\T^d)$ is compact  we also have $\Vert \overline{\rho}^n_{t,h}- \overline{\rho}_{t,h}\Vert_{H^{-1}(\T^d)} \to 0$ as $n\to \infty$. We then get
\[\Vert \rho^n(t,.)-\rho(t,.) \Vert_{H^{-1}} \le 2C \sqrt{h}+  \Vert \overline{\rho}^n_{t,h}- \overline{\rho}_{t,h}\Vert_{H^{-1}(\T^d)}\]
from which we deduce that  $\Vert \rho^n(t,.)-\rho(t,.) \Vert_{H^{-1}}$ tends to $0$. Thanks to \pref{hypv2}, this implies that $\Vert V[\rho^n(t,.)]-V[\rho(t,.)]\Vert_{L^2(\T^d)}$ tends to $0$. The claimed $L^2$ convergence then follows from \pref{hypv1} and Lebesgue's dominated convergence Theorem.

\end{proof}

We now introduce a regularized nonlinearity to approximate \pref{nonlineardiff2} by a uniformly parabolic equation as follows. Let $\eps\in (0, 1)$, let $\delta_\eps$ and $M_\eps$ be respectively the smallest $\rho$ for which $F''(\rho)\ge \eps$ and the largest $\rho$ for which $F''(\rho)\le \eps^{-1}$. Let then $F_\eps$ be defined by
\begin{equation}\label{Feps}
F_\eps(\rho):=\begin{cases} F(\delta_\eps)+ F'(\delta_\eps)(\rho-\delta_\eps)+\frac{\eps}{2} (\rho-\delta_\eps)^2  \mbox{ if } \rho\in [0, \delta_\eps];\\
F(\rho) \mbox{ if } \rho\in[\delta_\eps, M_\eps],\\
F(M_\eps)+ F'(M_\eps)(\rho-M_\eps)+\frac{1}{2\eps} (\rho-M_\eps)^2  \mbox{ if } \rho \ge M_\eps.   
\end{cases}
\end{equation}
Clearly, by construction $F_\eps$ is convex and $C^2$ on $\R_+$ with
\begin{equation}\label{nondeg}
\eps \le F''_\eps\le \frac{1}{\eps} \mbox{ on $\R_+$}
\end{equation}
and $F_\eps$ converges pointwise to $F$ since $\delta_\eps$ and $M_\eps$ converge respectively to $0$ and $+\infty$. In fact, this approximation also has good $\Gamma$-convergence properties:

\begin{lem}\label{lemgamma}
Let $\theta\in L^2((0,T)\times \T^d, \R_+)$, then 
\begin{equation}\label{gamma1}
\lim_{\eps \to 0^+} \int_0^T \int_{\T^d} F_\eps(\theta(t,x))\mbox{d} x \mbox{d} t=\int_0^T \int_{\T^d} F(\theta(t,x))\mbox{d} x \mbox{d} t
\end{equation}
moreover if $\theta_\eps \in L^2((0,T)\times \T^d), \R_+)$ weakly converges to $\theta$ in $\in L^2((0,T)\times \T^d)$, then
 \begin{equation}\label{gamma2}
\liminf_{\eps \to 0^+} \int_0^T \int_{\T^d} F_\eps(\theta_\eps(t,x))\mbox{d} x \mbox{d} t\ge \int_0^T \int_{\T^d} F(\theta(t,x))\mbox{d} x \mbox{d} t
\end{equation}
\end{lem}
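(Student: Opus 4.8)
The plan is to derive both statements from a handful of elementary pointwise comparison estimates between $F_\eps$ and $F$, after which \pref{gamma1} follows from a pointwise/dominated convergence argument and \pref{gamma2} from a truncation combined with weak lower semicontinuity of a convex integral functional and a monotone limit. The engine is the following set of facts, all consequences of \pref{hypF2} and the explicit form of $F_\eps$. Since $F''$ is nondecreasing, on $[0,\delta_\eps]$ (when $\delta_\eps>0$) one has $F''\le F''(\delta_\eps)=\eps$, so $F-F_\eps$ is concave there and vanishes together with its first derivative at $\delta_\eps$; hence $F\le F_\eps$ on $[0,\delta_\eps]$. Symmetrically $F_\eps\le F$ on $[M_\eps,+\infty)$, while $F_\eps=F$ on $[\delta_\eps,M_\eps]$ by construction. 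From the explicit quadratic formula on $[0,\delta_\eps]$ together with $\delta_\eps\le\delta_1$ for $\eps\le1$ and $F,F'\ge0$, one extracts a constant $C_1$ independent of $\eps\in(0,1)$ with $0\le F_\eps\le C_1$ on $[0,\delta_\eps]$, which combined with $0\le F_\eps\le F$ on $[\delta_\eps,+\infty)$ gives the global bound
\[
0\le F_\eps\le F+C_1\qquad\text{on }\R_+ .
\]
Finally, for any $k\in(\delta_\eps,M_\eps)$, convexity of $F_\eps$ \pref{nondeg} together with $F_\eps(k)=F(k)$ and $F_\eps'(k)=F'(k)$ yields the tangent-line bound $F_\eps(\rho)\ge F(k)+F'(k)(\rho-k)$ for every $\rho\ge0$. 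I also use that $F_\eps\to F$ pointwise on $\R_+$, which the excerpt has already recorded.

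For \pref{gamma1}: from $F_\eps\to F$ pointwise, $F_\eps(\theta)\to F(\theta)$ a.e.\ on $(0,T)\times\T^d$, and since $F_\eps(\theta)\ge0$, Fatou's lemma gives $\int_0^T\int_{\T^d}F(\theta)\le\liminf_{\eps\to0^+}\int_0^T\int_{\T^d}F_\eps(\theta)$. If the right-hand side of \pref{gamma1} is finite, then $F_\eps(\theta)$ is dominated by $F(\theta)+C_1\in L^1$, and the reverse Fatou lemma (or dominated convergence) provides the matching upper bound, so the limit exists and equals $\int_0^T\int_{\T^d}F(\theta)$; if that integral is $+\infty$, the Fatou bound already forces $\int_0^T\int_{\T^d}F_\eps(\theta)\to+\infty$. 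Either way \pref{gamma1} holds.

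For \pref{gamma2}: fix $k>0$ and let $G_k$ coincide with $F$ on $[0,k]$ and with its affine tangent $\rho\mapsto F(k)+F'(k)(\rho-k)$ on $[k,+\infty)$; then $G_k$ is convex, nonnegative, $C^1$, of linear growth, and $G_k\le F$. For $\eps$ small enough that $\delta_\eps<k<M_\eps$, the comparison estimates give $F_\eps\ge G_k$ on all of $\R_+$ — equality $F_\eps=F=G_k$ on $[0,k]$ (using $F\le F_\eps$ on $[0,\delta_\eps]$ and $F_\eps=F$ on $[\delta_\eps,k]$), and the tangent-line bound exactly $=G_k$ on $[k,+\infty)$ — hence $\int_0^T\int_{\T^d}F_\eps(\theta_\eps)\ge\int_0^T\int_{\T^d}G_k(\theta_\eps)$. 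The functional $v\mapsto\int_0^T\int_{\T^d}G_k(v)$ is a convex integral functional, finite and continuous for strong $L^2$ convergence because $G_k$ has linear growth, hence weakly lower semicontinuous on $L^2((0,T)\times\T^d)$; since $\theta_\eps\wto\theta$, we obtain
\[
\liminf_{\eps\to0^+}\int_0^T\int_{\T^d}F_\eps(\theta_\eps)\ \ge\ \int_0^T\int_{\T^d}G_k(\theta)\qquad\text{for every }k>0 .
\]
Because $\partial_k G_k(\rho)=F''(k)(\rho-k)_+\ge0$, the functions $G_k$ increase pointwise to $F$ as $k\to+\infty$; letting $k\to+\infty$ in the displayed inequality and using monotone convergence gives \pref{gamma2}.

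The main obstacle is \pref{gamma2}: one needs a lower bound for $F_\eps$ valid on the \emph{whole} half-line by a single fixed convex integrand, and this is precisely where both the identity $F_\eps=F$ on $[\delta_\eps,M_\eps]$ and the convex tangent-line bound are indispensable (the naive bound $F_\eps\ge0$ or a single tangent line of $F$ is not enough to recover $\int F(\theta)$ in the limit). One must also be careful to take the two limits in the right order — $\eps\to0$ first for fixed $k$, then $k\to+\infty$. For \pref{gamma1} the only point requiring attention is that $F$ may grow faster than quadratically, so $F(\theta)$ need not be integrable; this is why the argument splits according to whether $\int_0^T\int_{\T^d}F(\theta)$ is finite or infinite.
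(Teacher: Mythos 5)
Your proof is correct and follows essentially the same route as the paper: Fatou plus the comparison of $F_\eps$ with $F$ (the excess living only on $[0,\delta_\eps]$ and vanishing there) for \pref{gamma1}, and for \pref{gamma2} a truncation of $F$ by its tangent line at a fixed level (your $G_k$, the paper's $F^\gamma$), weak lower semicontinuity from convexity, and a monotone passage to the limit in the truncation parameter. The only differences are cosmetic (your global bound $F_\eps\le F+C_1$ with dominated convergence versus the paper's direct estimate of $\int_{\{\theta\le\delta_\eps\}}(F_\eps-F)$, and your observation that $F_\eps\ge G_k$ everywhere, which removes the paper's vanishing correction term); just note that the constant $C_1$ should be taken for $\eps$ small enough that $\delta_\eps$ is finite, since $\delta_1$ need not be defined.
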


\begin{proof}
Fatou's lemma first yields
\[\liminf_{\eps \to 0^+} \int_0^T \int_{\T^d} F_\eps(\theta(t,x))\mbox{d} x \mbox{d} t\ge \int_0^T \int_{\T^d} F(\theta(t,x))\mbox{d} x \mbox{d} t\]
on the other hand
\[ \int_0^T \int_{\T^d} F_\eps(\theta(t,x))\mbox{d} x \mbox{d} t \le  \int_0^T \int_{\T^d} F(\theta(t,x)) \mbox{d} x \mbox{d} t+ \int \int_{\{\theta \le \delta_\eps\}} (F_\eps(\theta)-F(\theta)) \mbox{d} x \mbox{d} t\]
since the second term in the right hand side converges to $0$, we easily deduce \pref{gamma1}. Let us now assume that $\theta_\eps \in L^2((0,T)\times \T^d, \R_+)$ weakly converges to $\theta$ in $\in L^2((0,T)\times \T^d)$. Let $\gamma>0$  (fixed for the moment) and  denote by $F^{\gamma}$ the function defined by
\[F^{\gamma}(\rho)=\begin{cases}  F(\rho) \mbox{ if } \rho \in [0, \gamma], \\
F(\gamma)+F'(\gamma)(\rho-\gamma) \mbox{ if } \rho \ge \gamma
  \end{cases}
\]
by construction $F^\gamma$ is convex and below $F$. For $\eps>0$ small enough so that $\gamma \in [\delta_\eps, M_\eps]$, we similarly define  
\[F^{\gamma}_\eps (\rho)=\begin{cases}  F_\eps(\rho) \mbox{ if } \rho \in [0, \gamma], \\
F(\gamma)+F'(\gamma)(\rho-\gamma) \mbox{ if } \rho \ge \gamma
  \end{cases}
\]
so that $F^\gamma_\eps$ is convex and coincides with $F^\gamma$ on $[\delta_\eps, +\infty)$. We then have
\[\begin{split}
\liminf_{\eps \to 0^+} \int_0^T \int_{\T^d} F_\eps(\theta_\eps(t,x))\mbox{d} x \mbox{d} t\ge \liminf_{\eps \to 0^+} \int_0^T \int_{\T^d} F^\gamma_\eps(\theta_\eps(t,x))\mbox{d} x \mbox{d} t\\
\ge \liminf_{\eps \to 0^+} \int_0^T \int_{\T^d} F^\gamma(\theta_\eps(t,x))\mbox{d} x \mbox{d} t + \liminf_{\eps \to 0^+} \int\int_{\{\theta_\eps \le \delta_\eps\}} (F_\eps(\theta_\eps)-F(\theta_\eps))
\end{split}\]
the second term converges to $0$ whereas by weak lower semi-continuity (thanks to the convexity of $F^\gamma$)  we have
\[\liminf_{\eps \to 0^+} \int_0^T \int_{\T^d} F^\gamma(\theta_\eps(t,x))\mbox{d} x \mbox{d} t \ge \int_0^T \int_{\T^d} F^\gamma(\theta(t,x))\mbox{d} x \mbox{d} t,\]
hence
\[ \liminf_{\eps \to 0^+} \int_0^T \int_{\T^d} F_\eps(\theta_\eps(t,x))\mbox{d} x \mbox{d} t\ge \sup_{\gamma >0} \int_0^T \int_{\T^d} F^\gamma(\theta(t,x))\mbox{d} x \mbox{d} t\]
and then \pref{gamma2} easily follows from the previous inequality, the fact that $F^\gamma$ converges monotonically to $F$ and Beppo-Levi's monotone convergence Theorem. 
\end{proof}

\begin{thm}\label{existporous}
Assume \pref{hypv1}-\pref{hypv2}-\pref{hypF1}-\pref{hypF2}-\pref{hypF3}-\pref{hyprho0}, then \pref{nonlineardiff2} admits at least one weak nonnegative solution. 

\end{thm}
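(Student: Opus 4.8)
The plan is to approximate \pref{nonlineardiff2} by the \emph{uniformly parabolic} equations obtained by replacing the (possibly degenerate) nonlinearity $F$ by the regularization $F_\eps$ of \pref{Feps}, to solve each approximate problem by a Schauder fixed-point argument (the drift being handled by Lemma \ref{reguvt}), to derive parabolic energy estimates that are uniform in $\eps$, and finally to pass to the limit $\eps\to 0^+$ using the Aubin--Lions lemma, Lemma \ref{reguvt} and Lemma \ref{lemgamma}.

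\textbf{Step 1: the regularized problem.} Fix $\eps\in(0,1)$ and, for a probability density $\bar\rho\in L^2((0,T)\times\T^d)$, let $S_\eps(\bar\rho)$ be the weak solution $\rho$ of $\partial_t\rho-\Delta F_\eps'(\rho)+\dive(\rho\,V[\bar\rho])=0$, $\rho\vert_{t=0}=\rho_0$. Since $\eps\le F_\eps''\le 1/\eps$ by \pref{nondeg} (so $F_\eps'$ is globally Lipschitz) and $V[\bar\rho]\in W^{1,\infty}$ with bounds independent of $\bar\rho$ by \pref{hypv1}, this is a nondegenerate quasilinear parabolic equation; by a Galerkin approximation together with the a priori bounds below (or classical quasilinear theory) it admits a unique weak solution $\rho\in L^2(0,T;H^1(\T^d))\cap H^1(0,T;H^{-1}(\T^d))\hookrightarrow C([0,T];L^2(\T^d))$, with $\rho\ge 0$ (test against the negative part of $\rho$) and $\int_{\T^d}\rho(t,\cdot)=\int_{\T^d}\rho_0=1$ (integrate in space). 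Testing against $\rho$ and against $F_\eps'(\rho)$ and using \pref{hypv1}, Young's inequality and Gr\"onwall gives $\eps$-dependent but $\bar\rho$-independent bounds on $\rho$ in $L^\infty(0,T;L^2)\cap L^2(0,T;H^1)$ and on $\partial_t\rho$ in $L^2(0,T;H^{-1})$. Hence $S_\eps$ maps a suitable closed convex set $K_\eps$ of probability densities into itself, and $K_\eps$ is compact in $L^2((0,T)\times\T^d)$ by the Aubin--Lions lemma. Continuity of $S_\eps$ on $K_\eps$ follows from Lemma \ref{reguvt} (to pass to the limit in $V[\bar\rho]$), the global Lipschitz character of $F_\eps'$, and uniqueness for the frozen-drift problem. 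Schauder's fixed-point theorem then yields $\rho_\eps=S_\eps(\rho_\eps)$, a weak solution of the regularized equation.

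\textbf{Step 2: estimates uniform in $\eps$.} Testing the regularized equation against $\rho_\eps$, using $\|\nabla V[\rho_\eps]\|_{L^\infty}\le C$ from \pref{hypv1} and Gr\"onwall, gives $\sup_t\|\rho_\eps(t,\cdot)\|_{L^2}^2\le e^{CT}\|\rho_0\|_{L^2}^2$, uniformly in $\eps$. Testing against $F_\eps'(\rho_\eps)$ gives the energy identity $\frac{d}{dt}\int F_\eps(\rho_\eps)+\int|\nabla F_\eps'(\rho_\eps)|^2=\int\rho_\eps V[\rho_\eps]\cdot\nabla F_\eps'(\rho_\eps)$; absorbing the right-hand side by Young and integrating in time, and using that $\int F_\eps(\rho_0)$ stays bounded as $\eps\to 0$ (which follows from $F(\rho_0)\in L^1$ in \pref{hyprho0}, the pointwise convergence $F_\eps\to F$, and the growth bound \pref{hypF3} to control the region where $\rho_0$ exceeds the turning point $M_\eps$), we obtain $\|\nabla F_\eps'(\rho_\eps)\|_{L^2((0,T)\times\T^d)}\le C_T$ and $\sup_t\int F_\eps(\rho_\eps(t,\cdot))\le C_T$. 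Combining these with \pref{hypF3}, a Poincar\'e inequality, and Chebyshev's inequality on the superlevel sets $\{\rho_\eps\ge M_\eps\}$ upgrades this to $\|F_\eps'(\rho_\eps)\|_{L^2(0,T;H^1)}\le C_T$; since $\rho_\eps V[\rho_\eps]$ is bounded in $L^2$, the equation then gives $\|\partial_t\rho_\eps\|_{L^2(0,T;H^{-1})}\le C_T$.

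\textbf{Step 3: passage to the limit, and main obstacle.} By the Aubin--Lions lemma, $(\rho_\eps)$ is relatively compact in $L^2((0,T)\times\T^d)$; extract a subsequence with $\rho_\eps\to\rho$ strongly in $L^2$ and a.e., $\rho\ge0$, $\rho\in L^\infty(0,T;L^2)$, and $F_\eps'(\rho_\eps)\wto\chi$ in $L^2(0,T;H^1)$. From the construction \pref{Feps} one checks that $F_\eps'\to F'$ locally uniformly on $[0,+\infty)$, so $F_\eps'(\rho_\eps)\to F'(\rho)$ a.e.; together with the $L^2$ bound this forces $\chi=F'(\rho)$, whence $F'(\rho)\in L^2(0,T;H^1)$. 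Moreover Lemma \ref{reguvt}, whose hypotheses hold by the uniform $L^2(0,T;H^{-1})$ bound on $\partial_t\rho_\eps$, gives $V[\rho_\eps]\to V[\rho]$ strongly in $L^2$, so $\rho_\eps V[\rho_\eps]\to\rho V[\rho]$ in $L^1$. Passing to the limit in the weak formulation of the regularized equation then yields \pref{weaksol} (Lemma \ref{lemgamma} can be used to transfer the finite-energy control $\int F(\rho(t,\cdot))<+\infty$ to the limit), which proves Theorem \ref{existporous}. The main difficulty is precisely this last step: since the diffusion degenerates as $\eps\to 0$, only $\nabla F_\eps'(\rho_\eps)$ — not $\nabla\rho_\eps$ — is controlled uniformly, so strong/a.e. compactness of $\rho_\eps$ must be extracted carefully, and the quadratic regularizations of $F_\eps$ near $0$ and near $M_\eps$ must be handled via \pref{hypF3} in order to keep $\int F_\eps(\rho_0)$ and $\|F_\eps'(\rho_\eps)\|_{L^2(H^1)}$ bounded.
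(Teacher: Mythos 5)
Your Steps 1 and 2 are essentially sound and follow the paper's strategy (with two harmless variants: you freeze only the drift and solve a nondegenerate quasilinear problem instead of the paper's fully linearized equation \pref{defoperator}, and you obtain the uniform $L^2$ bound \pref{L2bound} by testing with $\rho^\eps$ and integrating the drift term by parts against $\dive V[\rho^\eps]$, rather than via the paper's $(\rho^\eps-\delta)_+$ truncation). But Step 3 contains a genuine gap, and it is exactly at the point you yourself flag as the main difficulty. You claim that ``by the Aubin--Lions lemma, $(\rho_\eps)$ is relatively compact in $L^2((0,T)\times\T^d)$'' and then identify the limit of $F'_\eps(\rho_\eps)$ through a.e.\ convergence of $\rho_\eps$. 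The uniform bounds available after Step 2 are: $\rho_\eps$ bounded in $L^\infty(0,T;L^2)$, $F'_\eps(\rho_\eps)$ bounded in $L^2(0,T;H^1)$ (see \pref{bounds4}) and $\partial_t\rho_\eps$ bounded in $L^2(0,T;H^{-1})$ (see \pref{h-1eps}). Aubin--Lions requires a uniform bound on $\rho_\eps$ itself in a space compactly embedded in $L^2(\T^d)$, and no such bound exists here: only $\nabla F'_\eps(\rho_\eps)$, not $\nabla\rho_\eps$, is controlled uniformly, and since the diffusion degenerates one cannot in general invert $F'$ with any uniform modulus. (For fixed $\eps$ the $H^1$ bound \pref{bounds} is $\eps$-dependent, which is why Aubin--Lions is legitimate in Step 1 but not uniformly in $\eps$.) The $L^\infty(L^2)$ and $H^{-1}$-in-time bounds only yield compactness in $C([0,T];H^{-1}(\T^d))$-type topologies, which does not give a.e.\ convergence of $\rho_\eps$, so your identification $\chi=F'(\rho)$ is not justified as written.

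The paper circumvents precisely this obstruction by never asking for strong convergence of $\rho^\eps$: it keeps only the weak limits $\rho^\eps\wto\rho$, $u^\eps:=F'_\eps(\rho^\eps)\wto u$ in $L^2(0,T;H^1)$, and identifies $u=F'(\rho)$ by a Minty-type convexity argument. Namely, convexity of $F_\eps$ gives \pref{desiredineqeps}, and to pass to the limit there one needs the product convergence \pref{limproduit}, $\int u^\eps\rho^\eps\to\int u\rho$, which is proved by a div--curl (compensated compactness) device: introduce $\psi^\eps$ with $-\Delta\psi^\eps=\rho^\eps$, observe that $\nabla\psi^\eps$ is bounded in $H^1((0,T)\times\T^d)$ (space regularity from \pref{L2bound}, time regularity because $-\Delta\partial_t\psi^\eps=\dive(\sigma^\eps)$ with $\sigma^\eps$ bounded in $L^2$), hence strongly compact in $L^2$, and write $\int u^\eps\rho^\eps=\int\nabla u^\eps\cdot\nabla\psi^\eps$. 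The liminf inequality of Lemma \ref{lemgamma} (which in your proposal is reduced to a cosmetic role) is then what allows passing from \pref{desiredineqeps} to \pref{desiredineq} under mere weak convergence of $\rho^\eps$. To repair your proof you must either reproduce this monotonicity/elliptic-potential argument or supply a genuinely different compactness mechanism (e.g.\ a nonlinear Aubin--Lions theorem of Rossi--Savar\'e type, which would require additional structural hypotheses linking $F'$ to the $L^2$ topology that \pref{hypF1}--\pref{hypF3} do not provide); standard Aubin--Lions does not close the argument. A secondary, minor point: your justification that $\int_{\T^d}F_\eps(\rho_0)$ is bounded uniformly in $\eps$ via ``pointwise convergence'' is not an argument; the correct (easy) reason is that by construction \pref{Feps} and \pref{hypF2} one has $F_\eps\le F$ on $[M_\eps,+\infty)$ and $F_\eps-F$ is uniformly small on $[0,\delta_\eps]$, so $\int F_\eps(\rho_0)\le\int F(\rho_0)+C$ by \pref{hyprho0}.
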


\begin{proof}

The proof proceeds in three steps.

{\bf{Step 1:}} Regularized equation. We first prove existence of a weak solution to the regularized equation: 
\begin{equation}\label{regularized}
\partial_t \rho^\eps-\Delta(F'_\eps(\rho^\eps))+\dive(\rho^\eps V[\rho^\eps])=0, \; \rho^\eps\vert_{t=0}=\rho_0.
\end{equation}
For fixed $\nu\in L^2((0,T)\times \T^d, \R_+)$ the linear parabolic equation:
\begin{equation}\label{defoperator}
\partial_t \rho^\eps-\dive(F''_\eps(\nu) \nabla \rho^\eps)+\dive(\rho^\eps V[\nu])=0, \; \rho^\eps\vert_{t=0}=\rho_0 
\end{equation}
admits a unique weak solution which we denote $\rho^\eps:=T^\eps(\nu)$, moreover $\rho^\eps$ is nonnegative by the maximum principle and  $\rho^\eps \in L^2((0,T), H^1(\T^d))\cap C([0, T], L^2(\T^d))$ and $\partial_t \rho \in L^2((0,T), H^{-1}(\T^d))$ and more precisely, thanks to \pref{hypv1} and \pref{nondeg} there is a constant $C_\eps$ such that:
\begin{equation} \label{bounds}
 \int_0^T \int_{\T^d} (\vert \nabla \rho^\eps\vert^2 + (\rho^\eps)^2)\mbox{d} x \mbox{d} t + \int_0^T \Vert \partial_t \rho^\eps\Vert_{H^{-1}}^2 \le C_\eps.
 \end{equation}
Thanks to \pref{hypv2},  it is easy to check that $T^\eps$ is a continuous map of $L^2((0,T)\times \T^d, \R_+)$. In addition, \pref{bounds} and the Aubin-Lions lemma (see \cite{aubin}, \cite{simon}) imply that $T^\eps(L^2((0,T)\times \T^d, \R_+))$ is relatively compact in $L^2((0,T)\times \T^d)$. Schauder's fixed point Theorem then ensures that $T^\eps$ admits at least one fixed point $\rho^\eps$ i.e. a solution of \pref{regularized}.

\smallskip

{\bf{Step 2:}} A priori estimates. We aim now to derive estimates independent of $\eps$ on $\rho^\eps$. Let $\delta>0$ such that $\delta \in (\delta_\eps, M_\eps)$,  we then take $(\rho^\eps-\delta)_+$ as test-function in \pref{regularized} (which is actually licit since this test-function belongs to $L^2((0,T), H^1(\T^d))$) integrating between $0$ and $t\in [0,T]$ this yields 
\[\ \int_0^t \langle \partial_t \rho^\eps,(\rho^\eps-\delta)_+ \rangle_{H^{-1}, H^1} ds + \int_0^t \int_{\{\rho^\eps \ge \delta\}} F''_\eps(\rho^\eps) \vert \nabla \rho^\eps\vert^2=\int_0^t \int_{\{\rho^\eps \ge \delta\}} \rho^\eps V[\rho^\eps] \cdot   \nabla \rho^\eps\]
hence, using Young's inequality
\[\begin{split}
 \frac{1}{2} \Vert  (\rho^\eps(t, .)-\delta)_+\Vert^2_{L^2} -\frac{1}{2} \Vert  (\rho_0-\delta)_+\Vert^2_{L^2}  + \int_0^t \int_{\{\rho^\eps \ge \delta\}} F''_\eps(\rho^\eps) \vert \nabla \rho^\eps\vert^2\\
 \le C\Big(\frac{\nu}{2}  \int_0^t \int_{\{\rho^\eps \ge \delta\}} \vert \nabla \rho^\eps \vert^2   + \frac{1}{2\nu} \int_0^t \int_{\{\rho^\eps \ge \delta\}}  (\rho^\eps)^2 \Big) \\
\le C\Big(\frac{\nu}{2}  \int_0^t \int_{\{\rho^\eps \ge \delta\}} \vert \nabla \rho^\eps \vert^2   + \frac{1}{\nu} \int_0^t \int_{\{\rho^\eps \ge \delta\}} [(\rho^\eps-\delta)_+^2  +\delta^2]\Big)
\end{split}\]
since $F_\eps''(\delta)=F''(\delta)>0$ and $F''$ nondecreasing, we can choose $\nu$ small enough so that the first term in the right hand side is absorbed by the left hand side of the inequality. Gronwall's lemma then gives 
\begin{equation}\label{L2bound}
\sup_{t\in (0,T)} \Vert \rho^\eps(t,.) \Vert_{L^2} \le C
\end{equation}
for a constant $C$ that does not depend on $\eps$. Next we take $F'_\eps(\rho^\eps)$ as test-function which similarly gives:
\[\begin{split}
\int_{\T^d} F_\eps(\rho^\eps(t,.))-\int_{\T^d} F_\eps(\rho_0) +\int_0^t \int_{\T^d} \vert \nabla F'_\eps(\rho^\eps)\vert^2 =\int_0^t \int_{\T^d} \rho^\eps V[\rho^\eps]\cdot \nabla F'_\eps(\rho^\eps)\\
\le C \Big(  \frac{\nu}{2} \int_0^t \int_{\T^d} \vert \nabla F'_\eps(\rho^\eps)\vert^2 +\frac{1}{2\nu} \int_0^t \int_{\T^d} (\rho^\eps)^2  \Big)
\end{split}\]
using \pref{L2bound} and chosing $\nu$ small enough we thus get
\begin{equation}\label{bounds2}
\sup_{t\in [0,T]} \int_{\T^d}  F_\eps(\rho_\eps(t,.)) +\int_0^T \int_{\T^d} \vert \nabla F'_\eps(\rho^\eps)\vert^2 \le C
\end{equation}
for a constant $C$ not depending on $\eps$. Next we use \pref{hypF3} and \pref{L2bound}-\pref{bounds2} to deduce that
\begin{equation}\label{bounds3}
\sup_{t\in [0,T]} \int_{\T^d}  F'_\eps(\rho^\eps)  \le C
\end{equation}
together with Poincar\'e-Wirtinger inequality, using again \pref{bounds2}, this gives
\begin{equation}\label{bounds4}
\Vert F'_\eps(\rho^\eps)\Vert_{L^2((0,T), H^1(\T^d))} \le C.
\end{equation}

\smallskip

{\bf{Step 3:}} Passing to the limit. Let us set
\begin{equation}
u^\eps:=F'_\eps(\rho^\eps), \; \sigma^\eps:= \nabla u^\eps -\rho^\eps V[\rho^\eps]
\end{equation}
so that \pref{regularized} can be rewritten as
\begin{equation}\label{reglinear}
\partial_t \rho^\eps= \Delta u^\eps -\dive(\rho^\eps V[\rho^\eps])=\dive(\sigma^\eps), \; \rho^\eps\vert_{t=0}=\rho_0.
\end{equation}
We know from the previous step that
\begin{equation}\label{allbounds}
\Vert \rho^\eps\Vert_{L^{\infty}((0,T), L^2(\T^d))} +\Vert \sigma^\eps\Vert_{L^{2}((0,T), L^2(\T^d))}+ \Vert u^\eps \Vert_{L^2( (0,T), H^1(\T^d))} \le C
\end{equation}
as well as
\begin{equation}\label{h-1eps}
\Vert \partial_t \rho^\eps \Vert_{L^2((0,T), H^{-1}(\T^d))}\le C.
\end{equation}

Passing to subsequences if necessary, we may therefore assume that
\begin{equation}\label{wlimits1}
\rho^\eps \wto \rho \mbox{ in $L^2((0,T)\times \T^d)$}, \; u^\eps \wto u  \mbox{ in $L^2((0,T), H^1(\T^d))$}
\end{equation}
and thanks to Lemma \ref{reguvt} and  \pref{h-1eps}, we have
\begin{equation}\label{wlimits2}
\sigma^\eps \wto \sigma:=\nabla u -\rho V[\rho] \mbox{ in $L^2((0,T)\times \T^d)$.}
\end{equation}
Obviously one then has:
\begin{equation}\label{reglinearlimit}
\partial_t \rho=\dive(\sigma)= \Delta u -\dive(\rho V[\rho]), \; \rho\vert_{t=0}=\rho_0.
\end{equation}
So to establish that $\rho$ is a weak nonnegative solution of \pref{nonlineardiff2}, it is enough to prove that $u=F'(\rho)$. Thanks to the convexity of $F$ this amounts to prove that 
\begin{equation}\label{desiredineq}
\int_0^T \int_{\T^d} F(\theta(t,x))\mbox{d} xdt\ge \int_0^T  \int_{\T^d} F(\rho(t,x))\mbox{d} xdt + \int_0^T  \int_{\T^d}  u (\theta-\rho) \mbox{d} x \mbox{d} t
\end{equation}
for every $\theta\in L^2((0,T)\times \T^d, \R_+)$.  By definition of $u^\eps$ we know that
 \begin{equation}\label{desiredineqeps}
\int_0^T \int_{\T^d} F_\eps(\theta(t,x))\mbox{d} xdt\ge \int_0^T  \int_{\T^d} F_\eps(\rho^\eps(t,x))\mbox{d} xdt + \int_0^T  \int_{\T^d}  u^\eps (\theta-\rho^\eps) \mbox{d} x \mbox{d} t.
\end{equation}
Let us  prove that  
\begin{equation}\label{limproduit}
\lim_{\eps} \int_0^T \int_{\T^d} u^\eps \rho^\eps =\int_0^T \int_{\T^d} u \rho. 
\end{equation}
For that purpose, let $\psi^\eps$ be the potential defined by
\begin{equation}
-\Delta \psi^\eps=\rho^\eps, \; \int_{\T^d} \psi^\eps=0, \; \psi^\eps \in H^1(\T^d).
\end{equation}
Thanks to \pref{L2bound}, we have $\psi^\eps \in L^{\infty}((0,T), H^2(\T^d))$ with a bound independendent of $\eps$:
\begin{equation}\label{H2boundpsi}
\Vert \nabla \psi^\eps\Vert_{L^{\infty}((0,T), H^1(\T^d))}\le C.
\end{equation}
As for the time derivative of $\nabla \psi^\eps$ we observe that
\[-\Delta (\partial_t \psi^\eps)=\partial_t \rho^\eps=\dive(\sigma^\eps)\]
so that, thanks to \pref{allbounds}, we have $\partial_t \nabla \psi^\eps\in L^2((0,T)\times \T^d)$ and more precisely 
\[ \Vert \partial_t \nabla  \psi^\eps  \Vert_{ L^2((0,T)\times \T^d)} \le \Vert \sigma^\eps\Vert_{L^2((0,T)\times \T^d)}\le C\]
this proves that $\nabla \psi^\eps$ is bounded in $H^1((0,T)\times \T^d)$, hence converges in $L^2((0,T)\times \T^d)$, up to an extraction if necessary,  to $\psi$ given by
 \begin{equation}
-\Delta \psi=\rho, \; \int_{\T^d} \psi=0, \; \psi \in H^1(\T^d).
\end{equation}
Weak convergence of $\nabla u^\eps$ and strong convergence of $\nabla \psi^\eps$ in $L^2$ then give
\[\begin{split}
\lim_{\eps} \int_0^T \int_{\T^d} u^\eps \rho^\eps= \lim_{\eps} \int_0^T \int_{\T^d} \nabla u^\eps \nabla \psi^\eps\\
= \int_0^T \int_{\T^d} \nabla u \nabla \psi=  \int_0^T \int_{\T^d} u \rho
\end{split}\]
which establishes \pref{limproduit}. Next, we use Lemma \ref{lemgamma}, letting $\eps$ tend to $0^+$, using \pref{limproduit} we obtain inequality \pref{desiredineq} which proves that $u=F'(\rho)$ and so $\rho$ is a weak solution of \pref{nonlineardiff2}, concluding the proof.   

\end{proof}

The previous arguments again clearly adapt to systems. More precisely, let us consider the system for the evolution of $l$ densities $\rho:=(\rho_1,\ldots, \rho_l)$:
\begin{equation}\label{nonlineardiffsystem}
\partial_t \rho_i-\Delta (F'_i(\rho_i))+\dive(\rho_i V_i[\rho])=0, \; \rho_{i}\vert_{t=0}=\rho_{i,0}   
\end{equation}
on $(0,+\infty)\times \T^d$. 
Assuming that each function $F_i$ satisfies \pref{hypF1}-\pref{hypF2}-\pref{hypF3}, that the initial conditions are probability densities which satisfy
\begin{equation}\label{hyprho0i}
\rho_{i,0}\in H^1(\T^d), \; F_i(\rho_{i,0})\in L^1(\T^d), \; F_i'(\rho_{i,0})\in H^1(\T^d), \; \forall i=1,\ldots, l,
\end{equation}
and for every $i=1,\ldots, l$,  the maps $V_i$ satisfy
\begin{equation}\label{hypv1i}
\forall \rho \in L^2( \T^d)^l,   V_i[\rho] \in W^{1, \infty}(\T^d), \; \sup_{\rho\in L^2(\T^d)^l} \{  \Vert V_i[\rho] \Vert_{L^\infty} +  \Vert \nabla  V_i[\rho] \Vert_{L^\infty} \}<+\infty
\end{equation}
and for every $R>0$, there exists a modulus $\omega_R$ such that, for every $(\rho, \nu)\in L^2(\T^d)^l\times L^2(\T^d)^l$ such that $\Vert \rho\Vert_{H^{-1}(\T^d)^l}\le R$ and $\Vert \nu \Vert_{H^{-1}(\T^d)^l}\le R$, one has
\begin{equation}\label{hypv2i}
\Vert V_i[\rho]-V_i[\nu]\Vert_{L^2(\T^d)} \le \omega_R\Big(\sum_{j=1}^l \Vert \rho_j-\nu_j \Vert_{H^{-1}(\T^d)}\Big).
\end{equation}

A direct adaptation of the proof of Theorem \ref{existporous} gives

\begin{thm}\label{existporoussystem}
Assume that each function $F_i$ satisfies \pref{hypF1}-\pref{hypF2}-\pref{hypF3}, and that \pref{hyprho0i}-\pref{hypv1i}-\pref{hypv2i} are satisfied for $i=1,\ldots, l$, then \pref{nonlineardiffsystem} admits at least one weak solution $(\rho_1,\ldots, \rho_l)$ with each $\rho_i$ nonnegative. 
\end{thm}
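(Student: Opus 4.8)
The plan is to follow the three-step scheme of the proof of Theorem~\ref{existporous} essentially verbatim, the only new feature being the coupling of the $l$ equations through the drifts $V_i[\rho]$; this coupling is harmless because, once a candidate is frozen in the drift, the regularized system decouples into $l$ independent linear parabolic equations. So first I would replace each $F_i$ by its regularization $F_{i,\eps}$ constructed as in \eqref{Feps}, so that $\eps\le F_{i,\eps}''\le\eps^{-1}$ on $\R_+$, and set up the fixed-point step: given $\nu=(\nu_1,\dots,\nu_l)\in L^2((0,T)\times\T^d,\R_+)^l$, solve for each $i$ the uniformly parabolic linear equation $\partial_t\rho_i^\eps-\dive(F_{i,\eps}''(\nu_i)\nabla\rho_i^\eps)+\dive(\rho_i^\eps V_i[\nu])=0$ with $\rho_i^\eps|_{t=0}=\rho_{i,0}$, getting a unique weak solution, nonnegative by the maximum principle, lying in $L^2((0,T),H^1)\cap C([0,T],L^2)$ with $\partial_t\rho_i^\eps\in L^2((0,T),H^{-1})$ and $\eps$-dependent bounds as in \eqref{bounds}. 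The resulting map $T^\eps(\nu)=(\rho_1^\eps,\dots,\rho_l^\eps)$ is continuous on $L^2((0,T)\times\T^d,\R_+)^l$ by \eqref{hypv2i} and has relatively compact range by Aubin--Lions, so Schauder gives a fixed point, i.e. a weak solution of the regularized system.

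Next I would derive the $\eps$-uniform a priori estimates exactly as in Step~2 of Theorem~\ref{existporous}, componentwise: testing the $i$-th equation with $(\rho_i^\eps-\delta)_+$ (for $\delta\in(\delta_\eps,M_\eps)$), using $F_{i,\eps}''(\delta)=F_i''(\delta)>0$, Young's inequality and Gronwall gives $\sup_t\|\rho_i^\eps(t,.)\|_{L^2}\le C$ independent of $\eps$; testing with $F_{i,\eps}'(\rho_i^\eps)$ gives $\sup_t\int_{\T^d}F_{i,\eps}(\rho_i^\eps)+\int_0^T\int_{\T^d}|\nabla F_{i,\eps}'(\rho_i^\eps)|^2\le C$; then \eqref{hypF3}, the $L^2$ bound and Poincar\'e--Wirtinger give $\|F_{i,\eps}'(\rho_i^\eps)\|_{L^2((0,T),H^1)}\le C$, and writing the equation as $\partial_t\rho_i^\eps=\dive(\sigma_i^\eps)$ with $\sigma_i^\eps:=\nabla F_{i,\eps}'(\rho_i^\eps)-\rho_i^\eps V_i[\rho^\eps]$ gives $\|\partial_t\rho_i^\eps\|_{L^2((0,T),H^{-1})}\le C$. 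All constants are $\eps$-independent because \eqref{hypv1i} keeps the drift terms uniformly bounded.

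Finally, for the passage to the limit I would extract, for each $i$, weak limits $\rho_i^\eps\wto\rho_i$ in $L^2((0,T)\times\T^d)$ and $u_i^\eps:=F_{i,\eps}'(\rho_i^\eps)\wto u_i$ in $L^2((0,T),H^1)$. The vector-valued analogue of Lemma~\ref{reguvt} holds with the same proof: the uniform $L^2((0,T),H^{-1})$ bound on $\partial_t\rho_j^\eps$ yields $\|\rho_j^\eps(t,.)-\rho_j(t,.)\|_{H^{-1}}\to0$ for a.e.\ $t$ and every $j$, hence $\sum_j\|\rho_j^\eps(t,.)-\rho_j(t,.)\|_{H^{-1}}\to0$, and \eqref{hypv2i}, \eqref{hypv1i} and dominated convergence give $V_i[\rho^\eps]\to V_i[\rho]$ strongly in $L^2((0,T)\times\T^d)$; consequently $\sigma_i^\eps\wto\sigma_i:=\nabla u_i-\rho_i V_i[\rho]$ and $\partial_t\rho_i=\Delta u_i-\dive(\rho_i V_i[\rho])$, $\rho_i|_{t=0}=\rho_{i,0}$. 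To identify $u_i=F_i'(\rho_i)$ I would again introduce $\psi_i^\eps$ with $-\Delta\psi_i^\eps=\rho_i^\eps$, $\int_{\T^d}\psi_i^\eps=0$; the $L^2$ bound on $\rho_i^\eps$ and $-\Delta\partial_t\psi_i^\eps=\dive(\sigma_i^\eps)$ make $\nabla\psi_i^\eps$ bounded in $H^1((0,T)\times\T^d)$, hence strongly convergent in $L^2$, so $\int u_i^\eps\rho_i^\eps=\int\nabla u_i^\eps\cdot\nabla\psi_i^\eps\to\int\nabla u_i\cdot\nabla\psi_i=\int u_i\rho_i$. Plugging this into the convexity inequality $\int F_{i,\eps}(\theta)\ge\int F_{i,\eps}(\rho_i^\eps)+\int u_i^\eps(\theta-\rho_i^\eps)$ and letting $\eps\to0$ with Lemma~\ref{lemgamma} gives $\int F_i(\theta)\ge\int F_i(\rho_i)+\int u_i(\theta-\rho_i)$ for all $\theta\in L^2((0,T)\times\T^d,\R_+)$, i.e.\ $u_i=F_i'(\rho_i)$, so $(\rho_1,\dots,\rho_l)$ is the desired weak solution.

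As in the scalar case, the one genuinely delicate point is the identification of the nonlinear diffusion fluxes $F_{i,\eps}'(\rho_i^\eps)$ in the limit, which is obtained indirectly via the convexity inequality, the product-limit identity \eqref{limproduit} type argument through the potentials $\psi_i^\eps$, and the $\Gamma$-convergence Lemma~\ref{lemgamma}, rather than by any compactness statement on the $\rho_i^\eps$ themselves. The coupling through the drifts introduces no new obstacle: Lemma~\ref{reguvt} and its proof are insensitive to the finite number of densities, and the regularized system is decoupled in the fixed-point step, so Schauder's theorem applies on the product space without modification.
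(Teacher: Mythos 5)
Your proposal is correct and follows essentially the same route as the paper, which proves Theorem \ref{existporoussystem} precisely by the ``direct adaptation'' of the three-step proof of Theorem \ref{existporous} that you spell out (componentwise regularization and fixed point with frozen drifts, $\eps$-uniform estimates, and identification of $u_i=F_i'(\rho_i)$ via the potentials $\psi_i^\eps$ and Lemma \ref{lemgamma}). The vector-valued version of Lemma \ref{reguvt} that you invoke to handle the coupled drifts is exactly the point the paper implicitly relies on, and your justification of it is sound.
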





\section{Uniqueness}\label{sec-uniq}

We end the paper by a general uniqueness argument based on  geodesic convexity. For the purpose of our paper it is enough to consider an internal energy that is a functional defined on the subset $\PTac$ of $\PT$ (consisting of absolutely continuous with respect to the Lebesgue measure on $\T^d$ elements of  $\PT$) and given by 
\[\E(\rho):=\int_{\T^d} E(\rho(x)) \mbox{ d} x , \; \rho \in \PTac,\]
where $E$ : $\R_+\to \R$ is a convex function with $E(0)=0$ and $E$ smooth on $(0,+\infty)$. Given $\rho$ and $\nu$ in $\PTac$,  there is a unique optimal transport map $T$ between $\rho$ and $\nu$ (see Cordero-Erausquin \cite{cordero} and McCann \cite{mcc} for the case of a general Riemannian manifold) i.e. a map such that $T_\#\rho=\nu$ and
\[W_2^2 (\rho, \nu)=\int_{\T^d} d^2(T(x), x) \rho(x) \mbox{d} x.\]
Moreover, this map is given by the gradient of a convex function $T(x)=\nabla \phi(x)$ with $\phi$ convex on $\R^d$ and such that $x\mapsto \nabla \phi(x)-x$ is periodic on $\R^d$ (so that $T$ indeed defines a map from $\T^d$ to itself). In fact, on $\T^d$, we should rather write 
\[T(x)=\exp_x(-\nabla \psi(x)) \mbox{ where } \psi(x)=\frac{1}{2} \vert x\vert^2-\phi(x).\]
Note in particular that $d(x, T(x))= \vert \nabla \psi(x) \vert$ and then
\[W_2(\rho, \nu)=\int_{\T^d} \vert \nabla \psi(x) \vert^2 \rho(x) \mbox{d} x.\]
The Wasserstein geodesic between $\rho$ and $\nu$ is then the curve $t\in [0,1]\mapsto \nu_t$ given by the McCann's interpolation:
\[\nu_t:=T_t \#\rho, \; T_t(x)=\exp_x(-t\nabla \psi(x)),\]
it is indeed a constant speed geodesic:
\[W_2(\nu_t, \nu_s)=\vert t-s\vert W_2(\rho, \nu)\]
and $T_t$ is the optimal transport map from $\rho$ to $\nu_t$. Then the internal energy $\E$ is said to be \emph{displacement convex} whenever for every $\rho$ and $\nu$ in $\PTac$, defining $\nu_t$ as above one has
\[t\in[0,1] \mapsto \E(\nu_t) \mbox{ is convex}.\]
If $E$ satisfies McCann's condition:
\begin{equation}\label{mccond}
r\in (0,+\infty)\mapsto r^{d} E(r^{-d}) \mbox{ is convex nonincreasing}
\end{equation}
then it is well-known that $\E$ is displacement convex (see McCann \cite{mccc}) and that for $\rho$, $\psi$  and $\nu$ as above, one has (setting $F'(\rho)=\rho E'(\rho)-E(\rho)$):
\[\begin{split}
\E(\nu)-\E(\rho)&\ge \lim_{t\to 0^+} \int_{\T^d} \frac{E(\nu_t(x))-E(\rho(x))}{t} \mbox{d} x\\
&= \lim_{t\to 0^+} \int_{\T^d} \frac{1}{t} ( E \Big(\frac{\rho(x)}{\det(DT_t(x))} \Big) \det(DT_t(x))-E(\rho(x))) \mbox{d} x\\
&= - \int_{\T^d} \nabla F'(\rho(x))\cdot \nabla \psi  \mbox{ d} x=
- \int_{\T^d} \nabla E'(\rho(x))\cdot \nabla \psi \rho(x) \mbox{ d} x
\end{split}\]
as soon $\nabla E'(\rho)$   in $L^2(\rho)$. Similarly, letting $S$  ($S(y)=y-\nabla \theta(y)$) be the optimal map from $\nu$ to $\rho$ and using the fact that $S(T(x))=x$ i.e. $\nabla \theta(T(x))=T(x)-x=-\nabla \psi(x)$, we get
\[\begin{split}
\E(\rho)-\E(\nu) &\ge - \int_{\T^d} \nabla E'(\nu(y))\cdot \nabla \theta(y) \nu(y) \mbox{ d} y\\
&=- \int_{\T^d} \nabla E'(\nu(T(x))\cdot \nabla \theta(T(x)) \rho(x) \mbox{ d} x\\
&= \int_{\T^d} \nabla E'(\nu(T(x))\cdot \nabla \psi (x) \rho(x) \mbox{ d} x
\end{split}\]
so that summing the two inequalities above gives
\begin{equation}\label{monot}
0\ge \int_{\T^d} (\nabla E'(\nu(T(x))-\nabla E'(\rho(x)) \cdot \nabla \psi (x) \rho(x) \mbox{ d} x. 
\end{equation}

Now let us consider the system for the evolution of $l$ densities $\rho=(\rho_1, \ldots, \rho_l)$:
\begin{equation}\label{nonlineardiffsystem2}
\partial_t \rho_i=\dive(\rho_i(\nabla E'_i(\rho_i)+ V_i[\rho])), 
\end{equation}
on $(0,+\infty)\times \T^d$. Let us assume that each $E_i$ is a convex function as above, that  $V_i$ maps  $\PT^l$  into $W^{1, \infty}(\T^d)$ and that for some constant $C$ one has
\begin{equation}\label{uniq1}
\vert V_i[\rho](x)- V_i[\rho](y)\vert  \le Cd(x,y), \; \forall \rho\in \PT^l \mbox{ and } (x,y)\in \T^d\times T^d
\end{equation}
and
\begin{equation}\label{uniq2}
\Vert \nabla V_i[\rho]-\nabla V_i [\nu]\Vert_{L^{\infty}(\T^d)} \le C\sum_{j=1}^l W_2(\rho_j,\nu_j), \; \forall (\rho, \nu)\in \PT^l\times \PT^l.
\end{equation}
Then the following uniqueness result holds:

\begin{thm}\label{stab-uniq}
Assume that the drifts $V_i$'s satisfy \pref{uniq1}-\pref{uniq2} and that the $E_i$'s satisfy McCann's condition \pref{mccond}. Let $\rho$ and $\nu$ be two solutions  on $(0,T)\times\T^d$ of   \pref{nonlineardiffsystem2}  such that
\begin{equation}\label{abscont}
\int_{0}^T \sum_{i=1}^l \Vert v_{i,t} \Vert_{L^2(\rho_{i,t})} \mbox{d}t + \int_0^T \sum_{i=1}^l  \Vert w_{i,t}\Vert_{L^2(\nu_{i,t})} \mbox{d}t<+\infty
\end{equation}
with
\begin{equation}\label{defidrifts}
v_{i,t}= \nabla E'_i(\rho_{i,t})+ V_i[\rho_t], \; w_{i,t}= \nabla E'_i(\nu_{i,t})+ V_i[\nu_t]
\end{equation}
then for every $t\in [0,T]$ one has the following stability estimate
\begin{equation}\label{stab-estim}
\sum_{i=1}^l  W_2^2 (\rho_{i,t}, \nu_{i,t})\le e^{4 Ct}  \sum_{i=1}^l W_2^2 (\rho_{i,0}, \nu_{i,0}),
\end{equation}
which in particular gives uniqueness for the Cauchy problem for \pref{nonlineardiffsystem2}.
\end{thm}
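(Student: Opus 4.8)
The plan is to differentiate the sum of squared Wasserstein distances $t\mapsto \sum_i W_2^2(\rho_{i,t},\nu_{i,t})$ along the two flows and apply a Gronwall argument. First I would recall the standard formula for the time-derivative of $W_2^2$ along curves solving continuity equations: if $\partial_t \rho_{i,t}+\dive(\rho_{i,t} v_{i,t})=0$ and $\partial_t \nu_{i,t}+\dive(\nu_{i,t} w_{i,t})=0$ with $L^2$ velocity fields in the sense of \pref{abscont}, then for a.e. $t$,
\[
\frac{1}{2}\frac{d}{dt} W_2^2(\rho_{i,t},\nu_{i,t}) = \int_{\T^d} v_{i,t}(x)\cdot\nabla\psi_{i,t}(x)\,\rho_{i,t}(x)\,dx - \int_{\T^d} w_{i,t}(y)\cdot\nabla\theta_{i,t}(y)\,\nu_{i,t}(y)\,dy,
\]
where $T_{i,t}=\exp(-\nabla\psi_{i,t})$ is the optimal map from $\rho_{i,t}$ to $\nu_{i,t}$ and $S_{i,t}$ (with potential $\theta_{i,t}$) its inverse, the optimal map from $\nu_{i,t}$ to $\rho_{i,t}$. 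This is the by-now classical computation (Ambrosio–Gigli–Savar\'e, Otto–Westdickenberg); the technical regularity needed to justify it is exactly the integrability assumption \pref{abscont}.

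Next I would substitute the drifts \pref{defidrifts}, splitting each velocity into its diffusive part $\nabla E_i'$ and its nonlocal part $V_i$. For the diffusive contributions, after pushing forward via the optimal map $S_{i,t}(T_{i,t}(x))=x$ exactly as in the derivation of \pref{monot} in the excerpt, the two terms combine into
\[
\int_{\T^d} \big(\nabla E_i'(\nu_{i,t}(T_{i,t}(x))) - \nabla E_i'(\rho_{i,t}(x))\big)\cdot\nabla\psi_{i,t}(x)\,\rho_{i,t}(x)\,dx \le 0,
\]
which is nonpositive precisely by displacement convexity of $\E_i$ under McCann's condition \pref{mccond}. So the diffusive part can simply be dropped from the upper bound — this is the whole point of the geodesic-convexity argument.

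It then remains to estimate the drift contribution $\int V_i[\rho_t]\cdot\nabla\psi_{i,t}\,\rho_{i,t} - \int V_i[\nu_t]\cdot\nabla\theta_{i,t}\,\nu_{i,t}$. Changing variables in the second integral via $T_{i,t}$ and using $\nabla\theta_{i,t}(T_{i,t}(x))=-\nabla\psi_{i,t}(x)$, this becomes $\int \big(V_i[\rho_t](x)-V_i[\nu_t](T_{i,t}(x))\big)\cdot\nabla\psi_{i,t}(x)\,\rho_{i,t}(x)\,dx$. I would then split $V_i[\rho_t](x)-V_i[\nu_t](T_{i,t}(x)) = \big(V_i[\rho_t](x)-V_i[\rho_t](T_{i,t}(x))\big) + \big(V_i[\rho_t](T_{i,t}(x))-V_i[\nu_t](T_{i,t}(x))\big)$; the first bracket is bounded by $C\,d(x,T_{i,t}(x)) = C|\nabla\psi_{i,t}(x)|$ by \pref{uniq1}, and the second by $C\sum_j W_2(\rho_{j,t},\nu_{j,t})$ by \pref{uniq2}. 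Combining with Cauchy–Schwarz and $\int|\nabla\psi_{i,t}|^2\rho_{i,t}=W_2^2(\rho_{i,t},\nu_{i,t})$ and summing over $i$, one obtains $\frac{d}{dt}\sum_i W_2^2(\rho_{i,t},\nu_{i,t}) \le 4C\sum_i W_2^2(\rho_{i,t},\nu_{i,t})$ (the constant absorbing the elementary inequalities $ab\le \frac12 a^2+\frac12 b^2$ and $(\sum_j a_j)(\sum_i b_i)\le \ldots$), and Gronwall's lemma yields \pref{stab-estim}.

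The main obstacle I anticipate is not the algebra but the rigorous justification of the differentiation formula for $W_2^2$ along the two flows: one needs that $\rho$ and $\nu$, a priori only weak solutions, are genuinely absolutely continuous curves in $(\PT,W_2)$ with the prescribed velocity fields as their metric derivatives, and one must handle the a.e.-in-$t$ differentiability and the non-smoothness of the Kantorovich potentials $\psi_{i,t}$ on the torus. This is where assumption \pref{abscont} is essential, and I would invoke the machinery of \cite{ags} (characterization of absolutely continuous curves in Wasserstein space and the chain rule along them) to close this gap, together with the periodic optimal-transport facts of \cite{cordero,mcc} already used in the excerpt for the structure of $T_{i,t}$.
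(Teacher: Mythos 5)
Your proposal follows essentially the same route as the paper's proof: a.e.\ differentiability of $t\mapsto W_2^2(\rho_{i,t},\nu_{i,t})$ along the two flows (Theorem 5.24 and Corollary 5.25 of \cite{fs}, justified exactly by \pref{abscont}), discarding the diffusive contribution by displacement convexity as in \pref{monot}, splitting the drift term and using \pref{uniq1}--\pref{uniq2}, Cauchy--Schwarz together with $\int_{\T^d}\vert\nabla\psi_{i,t}\vert^2\rho_{i,t}=W_2^2(\rho_{i,t},\nu_{i,t})$, and Gronwall. The only blemish is a sign-bookkeeping slip in your displayed derivative formula: since the velocities of the curves solving \pref{nonlineardiffsystem2} are $-v_{i,t}$ and $-w_{i,t}$ and you take $S_{i,t}=\mathrm{id}-\nabla\theta_{i,t}$, both integrals should carry the same sign, but the combined expressions you subsequently work with (the difference controlled by \pref{monot} and the drift term estimated in absolute value) are the correct ones, so the argument is unaffected.
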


\begin{proof}
Using Theorem 5.24 and Corollary 5.25  from \cite{fs}, assumption \pref{abscont} guarantees that $t\mapsto W_2^2(\rho_{i,t}, \nu_{i,t})$ is differentiable for a.e $t\in (0,T)$ with 
\[ \frac{d}{dt} \frac{1}{2} W_2^2 (\rho_{i,t}, \nu_{i,t})=\int_{\T^d} \nabla \psi_{i,t}(x)\cdot (w_{i,t}(T_{i,t}(x))-v_{i,t}(x)) \rho_{i,t}(x) \mbox{d} x  \]
where $\nabla \psi_{i,t}$ is such that $T_{i,t}:=\exp_x(-\nabla \psi_{i,t})$ is the optimal map between $\rho_{i,t}$ and $\nu_{i,t}$. Now thanks to McCann's condition and recalling \pref{monot}, we have
\[ \int_{\T^d} \nabla \psi_{i,t}(x)\cdot ( \nabla E'_i(\nu_{i,t} (T_{i,t}(x)))-\nabla E'_i(\rho_{i,t}) (x))) \rho_{i,t}(x) \mbox{d}x \le 0,\]
so that, using \pref{uniq1}-\pref{uniq2} yields
\[\begin{split}
\frac{d}{dt} \frac{1}{2} W_2^2 (\rho_{i,t}, \nu_{i,t}) &\le \int_{\T^d} \nabla \psi_{i,t}(x)\cdot (V_i[\nu_t](T_{i,t}(x))-V_i[\rho_t](x)) \rho_{i,t}(x) \mbox{d}x \\
&=  \int_{\T^d} \nabla \psi_{i,t}(x)\cdot (V_i[\nu_t](T_{i,t}(x))-V_i[\nu_t](x)) \rho_{i,t}(x) \mbox{d}x\\
&+ \int_{\T^d} \nabla \psi_{i,t}(x)\cdot (V_i[\nu_t](x)-V_i[\rho_t](x)) \rho_{i,t}(x) \mbox{d}x\\
& \le C \int_{\T^d} \vert   \nabla \psi_{i,t}(x)\vert d(x, T_{i,t}(x))  \rho_{i,t}(x) \mbox{d}x\\
&+ C \Big(\int_{\T^d}  \vert   \nabla \psi_{i,t}(x)\vert^2  \rho_{i,t}(x) \mbox{d}x\Big)^{\frac{1}{2}} \sum_{j=1}^l W_2(\rho_{j,t},\nu_{j,t})\\
&=C\left( W_2^2 (\rho_{i,t}, \nu_{i,t})+  W_2 (\rho_{i,t}, \nu_{i,t} )\sum_{j=1}^l W_2(\rho_{j,t},\nu_{j,t})\right).
\end{split}\]
Summing over $i$ and using Gronwall's lemma gives the desired inequality \pref{stab-estim}.

\end{proof}

\begin{rem}
In the uniqueness result stated in Theorem \ref{stab-uniq}, the integrability condition \pref{abscont} is made as an assumption since it ensures absolute continuity for the Wasserstein distance of the curves $\rho$ and $\nu$. In the potential case where $V_i=\nabla U_i$, under the assumptions of Theorem \ref{existporouspotsyst}, it can be checked (see \cite{laborde}) that solutions constructed by the semi-implicit JKO scheme actually satisfy  \pref{abscont}.

\end{rem}

{\textbf{Acknowledgements:}} G.C. gratefully acknowledges the hospitality of
the Mathematics and Statistics Department at UVIC (Victoria, Canada), and the
support from the CNRS, from the ANR, through the project ISOTACE (ANR-12-
MONU-0013) and from INRIA through the \emph{action exploratoire} MOKAPLAN.


\begin{thebibliography}{999}


\bibitem{ags} L. Ambrosio, N. Gigli, G. Savar\'e, {\it Gradient flows in metric spaces and in the space of probabiliy measures}, Lectures Math. ETH Z\"urich, Birkh\"auser Verlag, Basel, 2005.

\bibitem{aubin} J.-P. Aubin, {\em Un th\'eor\`eme de compacit\'e}, CRAS {\bf{256}},  5042--5044 (1963). 


\bibitem{cdfls} J. A. Carrillo, M. DiFrancesco, A. Figalli, T. Laurent, and D. Slepcev, {\it Global-in-time weak measure solutions and finite-time aggregation for nonlocal interaction equations}, Duke Math. J. {\bf{156(2)}}, 229--271 (2011).



\bibitem{cmv} J. A. Carrillo, R. J. McCann, and C. Villani, {\it Contractions in the 2-Wasserstein length space
and thermalization of granular media}, Arch. Ration. Mech. Anal. {\bf{179(2)}}, 217-263 (2006). 



\bibitem{cordero} D. Cordero-Erausquin, {\it  Sur le transport de mesures p\'eriodiques}, C.R. Acad. Sci. Paris S\'er. I Math. {\bf{329}}, 199--202 (1999).  

\bibitem{sd} S. Daneri, G. Savar\'e, {\it Eulerian calculus for the displacement convexity in the Wasserstein distance},  SIAM J. Math. Anal. {\bf{40}}, no. 3, 1104--1122 (2008).

\bibitem{dfig} D.G.  de Figueiredo, {\it Lectures on the Ekeland variational principle with applications and detours}, Tata Institute of Fundamental Research Lectures on Mathematics and Physics, 81. Published for the Tata Institute of Fundamental Research, Bombay; by Springer-Verlag, Berlin, (1989).


\bibitem {dff} M. Di Francesco, S. Fagioli, {\it Measure solutions for nonlocal interaction PDEs with two species}, Nonlinearity {\bf{26}}, 2777---2808 (2013).

\bibitem{dfm} M. Di Francesco, D. Matthes,  {\it Curves of steepest descent are entropy solutions for a class of degenerate convection-diffusion equations}, Calc. Var. Partial Differential Equations {\bf{50}}, no. 1-2, 199--230 (2014).


\bibitem{jko} R. Jordan, D. Kinderlehrer, F. Otto, {\it The Variational Formulation of
the Fokker-Plank Equation}, SIAM J. of Math. Anal. {\bf{29}}, 1-17 (1998).

\bibitem{laborde} M. Laborde, {\it On some non linear evolution systems which are perturbations of
Wasserstein gradient flows}, preprint 2015. 



\bibitem{lv} J. Lott, C. Villani, {\it Ricci curvature for metric-measure spaces
via optimal transport}, Annals of Mathematics {\bf{169}}, 903--991 (2009). 

\bibitem{mms} D. Matthes, R.J. McCann, G. Savar\'e, {\it A family of nonlinear fourth order equations of gradient flow type}, Comm. Partial Differential Equations {\bf{34}}, no. 10-12, 1352-1397 (2009).

\bibitem{mcc} R.-J. McCann, {\it Polar factorization of maps on Riemannian manifolds}, Geom. Funct. Anal. {\bf{11}}, 589-608  (2001). 

\bibitem{mccc} R.-J. McCann, {\it  A convexity principle for interacting gases}, Adv. Math. {\bf{128}}, 153-179 (1997). 

\bibitem{ow} F. Otto, M. Westdickenberg, {\it Eulerian calculus for the contraction in the Wasserstein distance}, SIAM J. Math. Anal. {\bf{37}}, no. 4, 1227--1255 (2005).

\bibitem{otto} F. Otto, {\it Doubly degenerate diffusion equations as steepest descent}, Manuscript (1996). 


\bibitem{ottopm} F. Otto, {\it The geometry of dissipative evolution equations: the porous-medium equation}, Comm. Partial Differential Equations {\bf{26}}, 101-174 (2001). 


\bibitem{rs} R. Rossi, G. Savar\'e,  {\it Tightness, integral equicontinuity and compactness for evolution problems in Banach spaces}, Ann. Sc. Norm. Super. Pisa Cl. Sci. {\bf{5(2)}}, 395--431 (2003).

\bibitem{fs} F. Santambrogio,  {\it Optimal Transport for Applied Mathematicians},  Birkh\"auser Verlag, Basel, 2015.


\bibitem{simon} J. Simon, {\em Compact sets in the space $L^p(0,T;B)$},  Ann. Mat. Pura Appl. {\bf{4}}, 146, 65--96 (1987).


\bibitem{sturm} K.-T. Sturm, {\em On the geometry of metric measure spaces}, Acta Math. {\bf{196}}, 65--131 (2006).


\bibitem{villani} C. Villani, {\it Topics in optimal transportation}, Graduate Studies in Mathematics, 58,  American Mathematical Society, Providence, RI, 2003.

\bibitem{villani2} C. Villani, {\it Optimal transport: Old and New}, Grundlehren der mathematischen Wissenschaften, Springer-Verlag, Heidelberg, 2009. 






\end{thebibliography}
\end{document}